\newcommand{\medint}{-\kern  -,375cm\int}
\definecolor{ora}{rgb}{0.8,0.2,0.1}
\definecolor{vio}{rgb}{0.5,0,0.5}
\definecolor{gre}{rgb}{0.1,0.6,0}
\definecolor{verde}{rgb}{0,0.7,0.4}
\newenvironment{michelarev}{\color{azzurro}}{\color{black}}
\newcommand{\bmicr}{\begin{michelarev}}
\newcommand{\emicr}{\end{michelarev}}
\theoremstyle{plain}
\newtheorem{theorem}{Theorem}[section]
\newtheorem{lemma}[theorem]{Lemma}
\newtheorem{proposition}[theorem]{Proposition}
\theoremstyle{definition}
\theoremstyle{remark}
\theoremstyle{plain}
\numberwithin{equation}{section} \makeatletter
\renewcommand{\p@enumi}{\thesection.}
\makeatother \pagestyle{myheadings} \allowdisplaybreaks
\keywords{Nonstandard growth conditions; $p,q$-growth; Degenerate ellipticity; Lipschitz continuity.}
\subjclass[2010]{49N60, 35J50}
\begin{document}
\title[Lipschitz regularity for non-uniformly elliptic integrals ]{Lipschitz
regularity for\\
degenerate elliptic integrals with $p,q$-growth}
\author[ G. Cupini -- P. Marcellini -- E. Mascolo -- A. Passarelli di Napoli]%
{ G. Cupini -- P. Marcellini -- E. Mascolo -- A. Passarelli di Napoli}
\address{Giovanni Cupini: Dipartimento di Matematica, Universit\`a di Bologna%
\\
Piazza di Porta S.Donato 5, 40126 - Bologna, Italy}
\email{giovanni.cupini@unibo.it}
\address{Paolo Marcellini and Elvira Mascolo: Dipartimento di Matematica
``U. Dini'', Universit\`a di Firenze\\
Viale Morgagni 67/A, 50134 - Firenze, Italy}
\email{paolo.marcellini@unifi.it}
\email{elvira.mascolo@unifi.it}
\address{Antonia Passarelli di Napoli: Dipartimento di Matematica e Appl.
``R. Caccioppoli''\\
Universit\`a di Napoli ``Federico II''\\
Via Cintia, 80126 Napoli, Italy}
\email{antonia.passarellidinapoli@unina.it}
\thanks{\textit{Acknowledgements.} The authors are members of GNAMPA (Gruppo
Nazionale per l'Analisi Matematica, la Probabilit\`a e le loro Applicazioni)
of INdAM (Istituto Nazionale di Alta Matematica)}

\begin{abstract}
We establish the local Lipschitz continuity and the higher differentiability
of vector-valued local minimizers of a class of energy integrals of the
Calculus of Variations. The main novelty is that we deal with possibly
degenerate energy densities with respect to the $x-$variable.
\end{abstract}

\maketitle



%

\section{Introduction}

The paper deals with the regularity of minimizers of integral functionals of
the Calculus of Variations of the form 
\begin{equation}
F(u)=\int_{\Omega }f(x,Du)\,dx  \label{funzionale}
\end{equation}%
where $\Omega \subset \mathbb{R}^{n}$, $n\geq 2$, is a bounded open set, $%
u:\Omega \rightarrow \mathbb{R}^{N}$, $N\geq 1$, is a Sobolev map. The main
feature of \eqref{funzionale} is the possible degeneracy of the lagrangian $%
f(x,\xi )$ with respect to the $x-$variable. We assume that the Carath\'{e}%
odory function $f=f\left( x,\xi \right) $ is convex and of class $C^{2}$
with respect to $\xi \in \mathbb{R}^{N\times n}$, 
with $f_{\xi \xi }\left( x,\xi \right) $, $f_{\xi x}\left( x,\xi \right) $
also Carath\'{e}odory functions and $f(\cdot ,0)\in L^{1}(\Omega )$. We
emphasize that the $N\times n$ matrix of the second derivatives $f_{\xi \xi
}\left( x,\xi \right) $ not necessarily is uniformly elliptic and it may
degenerate at some $x\in \Omega $.

In the vector-valued case $N>1$ minimizers of functionals with general
structure may lack regularity, see \cite{deg},\cite{sverak},\cite{moo-sa},
and it is natural to assume a modulus-gradient dependence for the energy
density; i.e. that there exists $g=g(x,t):\Omega \times \lbrack 0,+\infty
)\rightarrow \lbrack 0,+\infty )$ such that 
\begin{equation}
f(x,\xi )=g(x,|\xi |).  \label{(A1)}
\end{equation}%
Without loss of generality we can assume $g(x,0)=0$; indeed the minimizers
of $F$ are minimizers of $u\mapsto \int_{\Omega }\left(
f(x,Du)-f(x,0)\right) \,dx$ too. Moreover, by \eqref{(A1)} and the convexity
of $f$, $g\left( x,t\right) $ is a non-negative, convex and increasing
function of $t\in \left[ 0,+\infty \right) $.

As far as the growth and the ellipticity assumptions are concerned, we
assume that there exist exponents $p,q$, nonnegative measurable functions $%
a(x),k(x)$ and a constant $L>0$ such that 
\begin{equation}
\left\{ 
\begin{array}{l}
a\left( x\right) \,(1+|\xi |^{2})^{\frac{p-2}{2}}|\lambda |^{2}\leq \langle
f_{\xi \xi }(x,\xi )\lambda ,\lambda \rangle \leq L\,(1+|\xi |^{2})^{\frac{%
q-2}{2}}|\lambda |^{2},\quad 2\leq p\leq q, \\ 
\left\vert f_{\xi x}(x,\xi )\right\vert \leq k\left( x\right) (1+|\xi
|^{2})^{\frac{q-1}{2}}%
\end{array}%
\right.  \label{(A2)--(A4)}
\end{equation}%
for a.e. $x\in \Omega $ and for every $\xi ,\lambda \in \mathbb{R}^{N\times
n}$. We allow the coefficient $a\left( x\right) $ to be zero, so that (\ref{(A2)--(A4)})$_{1}$ is a not uniform ellipticity
condition. As proved in Lemma \ref{l:growth}, (\ref{(A2)--(A4)})$_{1}$
implies the following possibly degenerate $p,q-$growth conditions for $f$,
for some constant $c>0$, 
\begin{equation}
c\,a\left( x\right) (1+|\xi |^{2})^{\frac{p-2}{2}}|\xi |^{2}\leq f(x,\xi
)\leq L(1+|\xi |^{2})^{\frac{q}{2}},\;\;\;\;\text{a.e. }x\in \Omega
,\;\forall \;\xi \in \mathbb{R}^{N\times n}.  \label{(A2)--(A4)-I}
\end{equation}

%
%

Our main result concerns the local Lipschitz regularity and the higher
differentiability of the local minimizers of $F$.

\begin{theorem}
\label{t:main} Let the functional $F$ in \eqref{funzionale} satisfy %
\eqref{(A1)} and \eqref{(A2)--(A4)}. Assume moreover that 
\begin{equation}
\frac{1}{a}\in L_{\mathrm{loc}}^{s}(\Omega ),\qquad k\in L_{\mathrm{loc}%
}^{r}(\Omega ),  \label{kba}
\end{equation}%
with $r,s>n$ and 
\begin{equation}
\frac{q}{p}<\frac{s}{s+1}\left( 1+\frac{1}{n}-\frac{1}{r}\right) .
\label{gap}
\end{equation}%
If $u\in W_{\mathnormal{loc}}^{1,1}(\Omega )$ is a local minimizer of $F$, 
then for every ball $B_{R_{0}}\Subset \Omega $ the following estimates 
\begin{equation}
\Vert Du\Vert _{L^{\infty }(B_{R_{0}/2})}\leq C\mathcal{K}%
_{R_{0}}^{\vartheta }\left( \int_{B_{R_{0}}}\left( 1+f(x,Du)\right)
\,dx\right) ^{\vartheta }  \label{fin}
\end{equation}%
\begin{equation}
\int_{B_{R_{0}/2}}a(1+|Du|^{2})^{\frac{p-2}{2}}\left\vert D^{2}u\right\vert
^{2}\,dx\leq C\mathcal{K}_{R_{0}}^{\vartheta }\left( \int_{B_{R_{0}}}\left(
1+f(x,Du)\right) \,dx\right) ^{\vartheta },  \label{hdfin}
\end{equation}%
hold with the exponent $\vartheta $ depending on the data, the constant $C$
also depending on $R_{0}$ and where $\mathcal{K}_{R_{0}}=1+\Vert a^{-1}\Vert
_{L^{s}(B_{R_{0}})}\Vert k\Vert _{L^{r}(B_{R_{0}})}^{2}$.%
%
%
%
\end{theorem}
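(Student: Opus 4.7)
\emph{Strategy.} My plan follows the now-classical regularization approach for $p,q$-growth problems. I would first approximate $f$ by $f_\varepsilon(x,\xi) = f(x,\xi) + \varepsilon(1+|\xi|^2)^{q/2}$ with $a$ replaced by $a_\varepsilon = a+\varepsilon$ in the structural inequalities; the regularized $f_\varepsilon$ is uniformly $p$-elliptic with standard $q$-growth, so minimizers $u_\varepsilon$ of $F_\varepsilon$ on $B_{R_0}$ subject to a mollified datum $u_\varepsilon|_{\partial B_{R_0}} = u_\delta|_{\partial B_{R_0}}$ are smooth enough to differentiate the Euler--Lagrange equation. The whole work lies in proving the a priori estimates (\ref{fin})--(\ref{hdfin}) for $u_\varepsilon$ with constants independent of $\varepsilon$; then weak compactness, (\ref{hdfin}), and lower semicontinuity of $F$ transfer these bounds to the original minimizer $u$ (a standard identification argument, together with the strict convexity inherited from (\ref{(A2)--(A4)})$_1$, ensures that the limit of $u_\varepsilon$ is precisely $u$).

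\emph{Second-order Caccioppoli inequality.} I would differentiate the Euler--Lagrange system of $F_\varepsilon$ in direction $x_s$ and test with $\varphi = \eta^2 D_s u\, \Phi(|Du|^2)$, where $\eta$ is a standard cutoff between two concentric balls and $\Phi(t)=(1+t)^\gamma$ with a parameter $\gamma \geq 0$ to be iterated. The lower bound in (\ref{(A2)--(A4)}) produces on the left-hand side the natural degenerate energy $\int a(x)\,\eta^2\Phi(|Du|^2)(1+|Du|^2)^{(p-2)/2}|D^2u|^2\,dx$. The cross term generated by $f_{\xi x}$ is controlled via (\ref{(A2)--(A4)})$_2$ and Young's inequality, which after reabsorbing the quadratic part in $|D^2u|$ leaves the characteristic weighted term $\int \eta^2 \tfrac{k^2}{a}\Phi(|Du|^2)(1+|Du|^2)^{q-1}\,dx$ together with a cutoff term with $|D\eta|^2(1+|Du|^2)^{q/2}\Phi(|Du|^2)$. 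The precise appearance of the combination $k^2/a$ explains why the hypotheses (\ref{kba}) are imposed on $1/a$ and $k$ separately, and also signals how $\mathcal K_{R_0}$ enters the final estimate.

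\emph{Moser-type iteration.} Setting $V_\gamma = \eta(1+|Du|^2)^{(p+2\gamma)/4}$ and applying Sobolev embedding $\|V_\gamma\|_{L^{2n/(n-2)}}^2 \le C\|DV_\gamma\|_{L^2}^2$, I combine it with the Caccioppoli inequality (after moving $a$ to the right via $a^{-1}$ as a weight on a controllable portion). H\"older's inequality, with exponents dual to $s$ and $r$ so as to factor out $\|a^{-1}\|_{L^s}\|k\|_{L^r}^2$, then converts the estimate into a reverse-H\"older/Moser inequality for $(1+|Du|)$ in which the ratio between consecutive integrability exponents is, after tracking constants, $\tfrac{q}{p}\cdot\tfrac{s+1}{s}$ divided by the Sobolev gain $\bigl(1+\tfrac{1}{n}-\tfrac{1}{r}\bigr)$. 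The main obstacle is to verify that (\ref{gap}) is precisely the condition ensuring this ratio is strictly less than $1$: once it is, finitely many iteration steps yield the $L^\infty$ bound (\ref{fin}) with dependence on $\mathcal K_{R_0}$ and $\int f(x,Du)\,dx$ tracked along the way.

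Finally, reinserting the pointwise bound (\ref{fin}) into the Caccioppoli inequality of the second step with $\gamma=0$ gives the higher differentiability estimate (\ref{hdfin}) at once, and passing to the limit $\varepsilon\to 0$ as outlined in the first step concludes the proof. The delicate points will be (a) producing the right regularization that is simultaneously uniformly elliptic and compatible with (\ref{(A2)--(A4)}) uniformly in $\varepsilon$, (b) the bookkeeping in the H\"older-Sobolev combination that exhibits (\ref{gap}) as the sharp threshold, and (c) ensuring that the limit of $u_\varepsilon$ is indeed the given minimizer $u$.
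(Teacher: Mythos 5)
Your proposal is in the same spirit as the paper (regularization, second-variation Caccioppoli with power weight, Moser iteration, then passage to the limit), but there is a concrete gap in the regularization step that the paper is carefully engineered to avoid.

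You regularize by adding $\varepsilon(1+|\xi|^2)^{q/2}$ and fix the boundary value by a mollified datum $u_\delta$ rather than by $u$ itself. The reason you need to mollify is that $\int_{B_{R_0}}(1+|Du|^2)^{q/2}\,dx$ may be $+\infty$: under the stated hypotheses, the only a priori integrability of $Du$ is $Du\in L^{\frac{ps}{s+1}}_{\mathrm{loc}}$, coming from \eqref{intu}, and $\frac{ps}{s+1}<q$ in general. But once you mollify the boundary datum you face a double limit $\varepsilon\to 0$, $\delta\to 0$, and to identify the limit you would need $\int f(x,Du_\delta)\,dx\to\int f(x,Du)\,dx$. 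Because $f$ depends on $x$ in an essential way and $Du$ only has the low integrability above, this is precisely the Lavrentiev issue; the paper explicitly remarks that absence of Lavrentiev is a \emph{consequence} of the theorem, not an input, so your scheme assumes what it should prove. The paper sidesteps this entirely by choosing the regularization $f_h=f+\frac{1}{h}(1+|\xi|^2)^{\frac{ps}{2(s+1)}}$ and keeping $u$ itself as boundary datum: the added term is finite on $Du$ by \eqref{intu}, hence $\int f_h(x,Du)\,dx<\infty$ and the energy comparison with $v_h$ closes without any mollification. Moreover, the exponent $\frac{ps}{s+1}$ is not just a convenience: with that choice the approximating integrands have genuine $(\frac{ps}{s+1},q)$-growth, and the gap condition required to invoke the Lipschitz theory of \cite{EMM1} and Theorem~\ref{highdiff} for the approximating problems, namely $\frac{q}{ps/(s+1)}<1+\frac{1}{n}-\frac{1}{r}$, is \emph{exactly} equivalent to \eqref{gap}. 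Your choice $q$ trivializes that step but at the cost of the Lavrentiev obstruction.

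Two smaller imprecisions. First, the unweighted Sobolev embedding $\|V_\gamma\|_{L^{2n/(n-2)}}\le C\|DV_\gamma\|_{L^2}$ is not the right tool here; the Caccioppoli left-hand side carries the degenerate weight $a(x)$, and the paper uses a weighted Sobolev inequality (Lemma~\ref{Sob}) whose gain is $\left(\frac{2s}{s+1}\right)^*$ rather than $2^*$ — the loss from $2^*$ to $\left(\frac{2s}{s+1}\right)^*$ is where $\|a^{-1}\|_{L^s}$ enters $\mathcal K_{R_0}$. Second, the description of the iteration ratio is not quite how the gap condition actually bites: the Moser ratio $\frac{2_s^*}{2m}>1$ in the paper already holds under the weaker condition $\frac{1}{s}+\frac{1}{r}<\frac{1}{n}$, and \eqref{gap} in its full strength is used later, when the extracted factor $\|Du\|_{L^\infty(B_R)}^{2(q-p)}$ is absorbed by Young's inequality (this is condition \eqref{e:2*s}) and again in the interpolation step proving the right-hand side is finite at $\gamma=0$. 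This bookkeeping is precisely the point (b) you flagged as delicate, and indeed it does not reduce to a single ratio comparison.
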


%

It is well known that to get regularity under $p,q-$growth the exponents $q$
and $p$ cannot be too far apart; usually, the gap between $p$ and $q$ is
described by a condition relating $p,q$ and the dimension $n$. In our case
we take into account the possible degeneracy of $a(x)$ and the condition (%
\ref{(A2)--(A4)})$_{2}$ on the mixed derivatives $f_{\xi x}$ in terms of a
possibly unbounded coefficient $k(x)$; then we deduce that the gap depends
on $s$, the summability exponent of $a^{-1}$ that \textquotedblleft
measures\textquotedblright\ how much $a$ is degenerate, and the exponent $r$
that tell us\ how far $k(x)$ is from being bounded. If $s=r=\infty $ then %
\eqref{gap} reduces to $\frac{q}{p}<1+\frac{1}{n}$ that is what one expects,
see \cite{cupguimas} and for instance \cite{MarcelliniJMAA2020}. Moreover,
if $s=\infty $ and $n<r\leq +\infty $, then \eqref{gap} reduces to $\frac{q}{%
p}<1+\frac{1}{n}-\frac{1}{r}$ and we recover the result of \cite{EMM1}.

Motivated by applications to the theory of elasticity, recently Colombo and
Mingione \cite{colmin},\cite{colmin2} (see also \cite{BCM},\cite{EMM3},\cite%
{defi-ming},\cite{defi-ok}) studied the so-called \textit{double phase
integrals} 
\begin{equation}
\int_{\Omega }|Du|^{p}+b(x)|Du|^{q}\,dx,\quad 1<p<q\,.  \label{funz_modello}
\end{equation}%
The model case we have in mind here is different: we consider the degenerate
functional with non standard growth of the form 
\begin{equation}
I(u)=\int_{B_{1}(0)}a(x)(1+|Du|^{2})^{\frac{p}{2}}+b(x)(1+|Du|^{2})^{\frac{q%
}{2}}\,dx  \label{functional-2}
\end{equation}%
with $0\leq a(x)\leq b(x)\leq L$ for some $L>0$. The integrand of $I(u)$
satisfies (\ref{(A2)--(A4)})$_{2}$ with $k\left( x\right) =\left\vert
Da\left( x\right) \right\vert +\left\vert Db\left( x\right) \right\vert $.
It is worth mentioning that in the literature $a(x)$ is usually assumed
positive and bounded away from zero, see e.g. \cite{BCM},\cite{colmin},
which is not the case here since $a(x)$ may vanish at some point. The
counterpart is that we consider the powers of $(1+|Du|^{2})^{\frac{1}{2}}$
instead of $|Du|$. We notice that the regularity result of Theorem \ref%
{t:main} is new also when $p=q\geq 2$, for example for the energy integral 
\begin{equation}
F_{1}(u)=\int_{B_{1}(0)}a(x)(1+|Du|^{2})^{\frac{p}{2}}\,dx
\label{integral with p}
\end{equation}%
with $a(x)\geq 0$, $\frac{1}{a}\in L^{s}(\Omega )$ and $\left\vert
Da\right\vert \in L^{r}$ with $\frac{1}{s}+\frac{1}{r}<\frac{1}{n}$. As far
as we know, the results proposed here are the first approach to the study of
the Lipschitz continuity of the local minimizers in the setting of
degenerate elliptic integrals under $p,q-$growth.

As well known, weak solutions to the elliptic equation in divergence form of
the type 
\begin{equation*}
-\mathrm{div}\left( A(x,Du)\right) =0\,\,\text{in}\,\,\Omega .
\end{equation*}%
are locally Lipschitz continuous provided the vector field $A:\Omega \times 
\mathbb{R}^{n}\rightarrow \mathbb{R}^{n}$ is differentiable with respect to $%
\xi $ and satisfies the uniformly elliptic conditions 
\begin{equation*}
\Lambda _{1}(1+|\xi |^{2})^{\frac{p-2}{2}}|\lambda |^{2}\leq \langle A_{\xi
}(x,\xi )\lambda ,\lambda \rangle \leq \Lambda _{2}(1+|\xi |^{2})^{\frac{p-2%
}{2}}|\lambda |^{2}.
\end{equation*}%
Trudinger \cite{Trud} started the study of the interior regularity of
solutions to linear elliptic equation of the form 
\begin{equation}
\sum_{i,j=1}^{n}\frac{\partial }{\partial x_{i}}\left( a_{ij}(x)\,\frac{%
\partial u}{\partial {x_{j}}}(x)\right) =0\,,\qquad x\in \Omega \subseteq 
\mathbb{R}^{n},  \label{linear-equation}
\end{equation}%
where the measurable coefficients $a_{ij}$ satisfy the non-uniform condition 
\begin{equation}
\lambda (x)|\xi |^{2}\leq \sum_{i,j=1}^{n}a_{ij}(x)\xi _{i}\xi _{j}\leq
n^{2}\mu (x)|\xi |^{2}  \label{ipotesi-trudinger}
\end{equation}%
for a.e. $x\in \Omega $ and every $\xi \in \mathbb{R}^{n}$. Here $\lambda
(x) $ is the minimum eigenvalue of the symmetric matrix $A(x)=(a_{ij}(x))$
and $\mu (x):=\sup_{ij}|a_{ij}|$. Trudinger proved that any weak solution of %
\eqref{linear-equation} is locally bounded in $\Omega $, under the following
integrability assumptions on $\lambda $ and $\mu $ 
\begin{equation}
\lambda ^{-1}\in L_{\mathrm{loc}}^{r}(\Omega )\quad \text{and}\quad \mu
_{1}=\lambda ^{-1}\mu ^{2}\in L_{\mathrm{loc}}^{\sigma }(\Omega )\quad \text{%
with $\frac{1}{r}+\frac{1}{\sigma }<\frac{2}{n}$}.
\label{risultato-trudinger}
\end{equation}%
The equation \eqref{linear-equation} is usually called \textit{degenerate}
when $\lambda ^{-1}\notin L^{\infty }(\Omega )$, whereas it is called 
\textit{singular} when $\mu \notin L^{\infty }(\Omega )$. These names in
this case refer to the \textit{degenerate} and the \textit{singular} cases
with respect to the $x-$variable, but in the mathematical literature these
names are often referred to the gradient variable; this happens for instance
with the $p-$\textit{Laplacian operator }$-\mathrm{div}\left( \left\vert
Du\right\vert ^{p-2}Du\right) $. We do not study in this paper the
degenerate case with respect to the gradient variable, but we refer for
instance to the analysis made by Duzaar and Mingione \cite%
{Duzaar-Mingione2010}, who studied an $L^{\infty }-$gradient bound for
solutions to non-homogeneous $p-$Laplacian type systems and equations; see
also Cianchi and Maz'ya \cite{Cianchi-Mazya2011} and the references therein
for the rich literature on the subject.

The result by Trudinger was extended in many settings and directions:
firstly, by Trudinger himself in \cite{Trud2} and later by Fabes, Kenig and
Serapioni in \cite{fabes-kenig-serapioni}; Pingen in \cite{pingen} dealt
with systems. More recently for the regularity of solutions and minimizers
we refer to \cite{bella-sch},\cite{BiaCupMas},\cite{cruz}\cite{CupMarMas17}, 
\cite{CupMarMas18},\cite{Iwaniec}. For the higher integrability of the
gradient we refer to \cite{Iwaniec-Sbordone} (see also \cite{CMP1}). Very
recently Calderon-Zygmund's estimates for the $p-$Laplace operator with
degenerate weights have been established in \cite{bdgp}. The literature
concerning non-uniformly elliptic problems is extensive and we refer the
interested reader to the references therein.

The study of the Lipschitz regularity in the $p,q-$growth context started
with the papers by Marcellini \cite{mar89},\cite{mar91} and, since then,
many and various contributions to the subject have been provided, see the
references in \cite{Mingione},\cite{MarcelliniJMAA2020}. The vectorial
homogeneous framework was considered in \cite{mar96},\cite{mar-papi} and by
Esposito, Leonetti and Mingione \cite{espleomin},\cite{espleominp}. The
condition (\ref{(A2)--(A4)})$_{2}$ for general non autonomous integrands $%
f=f(x,Du)$ has been first introduced in \cite{EMM1},\cite{EMM2},\cite{EMM3}.
It is worth to highlight that, due to the $x-$dependence, the study of
regularity is significantly harder and the techniques more complex. The
research on this subject is intense, as confirmed by the many articles
recently published, see e.g. \cite{CGGP},\cite{Cupini-Marcellini-Mascolo2012}%
,\cite{DiMarco-Marcellini},\cite{GPdN}, \cite%
{MarcelliniDiscrContDinSystems2019},\cite{MarcelliniNonAnal2019},\cite%
{MarcelliniJMAA2020},\cite{PdN14-1},\cite{PdN14-2},\cite{PdN15}.

Let us briefly sketch the tools to get our regularity result. First, for
Lipschitz and higher differentiable minimizers, we prove a weighted
summability result for the second order derivatives of minimizers of
functionals with possibly degenerate energy densities, see Proposition \ref%
{weighted}. Next in Theorem \ref{t:apriori} we get an \textit{a-priori
estimate} for the $L^{\infty }$-norm of the gradient. To establish the 
\textit{a-priori estimate} we use the Moser's iteration method \cite{moser}
for the gradient and the ideas of Trudinger \cite{Trud}. An approximation
procedure allows us to conclude. Actually, if $u$ is a local minimizer of %
\eqref{funzionale}, we construct a sequence of suitable variational problems
in a ball $B_{R}\subset \subset \Omega $ with boundary value data $u$. In
order to apply the a-priori estimate to the minimizers of the approximating
functionals we prove a higher differentiability result (Theorem \ref%
{highdiff}) for minimizers of the class of functionals with $p,q-$growth
studied in \cite{EMM1}, where only the Lipschitz continuity was proved. By
applying the previous a-priori estimate to the sequence of the solutions we
obtain a uniform control in $L^{\infty }$ of the gradient which allows to
transfer the local Lipschitz continuity property to the original minimizer $%
u $.

Another difficulty due to the $x-$dependence of the energy density is that
the \textit{Lavrentiev phenomenon} may occur. A local minimizer of $F$ is a
function $u\in W_{\mathrm{loc}}^{1,1}(\Omega )$ such that $f(x,Du)\in L_{%
\mathrm{loc}}^{1}(\Omega )$ and 
\begin{equation*}
\int_{\Omega }f(x,Du)\,dx\leq \int_{\Omega }f(x,Du+D\varphi )\,dx
\end{equation*}%
for every $\varphi \in C_{0}^{1}(\Omega )$. If $u$ is a local minimizer of
the functional $F$, by virtue of \eqref{(A2)--(A4)-I} we have that $%
a(x)|Du|^{p}\in L_{\mathrm{loc}}^{1}(\Omega )$ and, by \eqref{kba}, $u\in W_{%
\mathrm{loc}}^{1,\frac{ps}{s+1}}(\Omega )$ since 
\begin{equation}
\int_{B_{R}}|Du|^{\frac{ps}{s+1}}\,dx\leq \left(
\int_{B_{R}}a|Du|^{p}\,dx\right) ^{\frac{s}{s+1}}\left( \int_{B_{R}}\frac{1}{%
a^{s}}\,dx\right) ^{\frac{1}{s+1}}<+\infty   \label{intu}
\end{equation}%
for every ball $B_{R}\subset \Omega $. Therefore in our context a-priori the
presence of the Lavrentiev phenomenon cannot be excluded. Indeed, due to the
growth assumptions on the energy density, the integral in \eqref{funzionale}
is well defined if $u\in W^{1,q\frac{r}{r-1}}$, but a-priori this is not the
case if $u\in W^{1,\frac{ps}{s+1}}(\Omega )\setminus W_{\mathrm{loc}}^{1,q%
\frac{r}{r-1}}(\Omega )$. However, as a consequence of Theorem \ref{t:main},
under the stated assumptions \eqref{(A1)},\eqref{(A2)--(A4)},(\ref{kba}),(%
\ref{gap}) the Lavrentiev phenomenon for the integral functional $F$ in %
\eqref{funzionale} cannot occur. For the gap in the Lavrentiev phenomenon we
refer to \cite{zhikov},\cite{belloni-buttazzo},\cite{espleominp},\cite%
{esp-leo-pet}.

We conclude this introduction by observing that even in the one-dimensional
case the Lipschitz continuity of minimizers for non-uniformly elliptic
integrals is not obvious. Indeed, if we consider a minimizer $u$ to the
one-dimensional integral 
\begin{equation}
F\left( u\right) =\int_{-1}^{1}a\left( x\right) \left\vert u^{\prime }\left(
x\right) \right\vert ^{p}\,dx\,,\qquad p>1,  \label{one-dimensional integral}
\end{equation}%
then the Euler's first variation takes the form 
\begin{equation*}
\int_{-1}^{1}a\left( x\right) p\left\vert u^{\prime }\left( x\right)
\right\vert ^{p-2}u^{\prime }\left( x\right) \varphi ^{\prime }\left(
x\right) \,dx\,=0,\;\;\;\forall \;\varphi \in C_{0}^{1}\left( -1,1\right) .
\end{equation*}%
This implies that the quantity $a\left( x\right) \left\vert u^{\prime
}\left( x\right) \right\vert ^{p-2}u^{\prime }\left( x\right) $ is constant
in $\left( -1,1\right) $; it is a nonzero constant, unless $u\left( x\right) 
$ itself is constant in $\left( -1,1\right) $, a trivial case that we do not
consider here. In particular the sign of $u^{\prime }\left( x\right) $ is
constant and we get 
\begin{equation*}
\left\vert u^{\prime }\left( x\right) \right\vert ^{p-1}=\frac{c}{a\left(
x\right) }\,,\;\;\;\text{a.e.}\;x\in \left( -1,1\right) .
\end{equation*}%
Therefore if $a\left( x\right) $ vanishes somewhere in $\left( -1,1\right) $
then $\left\vert u^{\prime }\left( x\right) \right\vert $ is unbounded (and
viceversa), independently of the exponent $p>1$. Thus for $n=1$ the local
Lipschitz regularity of the minimizers does not hold in general if the
coefficient $a\left( x\right) $ vanishes somewhere.

We can compare this one-dimensional fact with the general conditions
considered in the Theorem \ref{t:main}. In the case $a\left( x\right)
=\left\vert x\right\vert ^{\alpha }$ for some $\alpha \in \left( 0,1\right) $
then, taking into account the assumptions in (\ref{kba}), for the integral
in (\ref{one-dimensional integral}) we have $k\left( x\right) =a^{\prime
}\left( x\right) =\alpha \left\vert x\right\vert ^{\alpha -2}x$ and 
\begin{equation*}
\left\{ 
\begin{array}{ccccc}
\frac{1}{a}\in L_{\mathrm{loc}}^{s}\left( -1,1\right) & \Leftrightarrow & 
1-\alpha s>0 & \Leftrightarrow & \alpha <\frac{1}{s} \\ 
k\left( x\right) =a^{\prime }\in L_{\mathrm{loc}}^{r}\left( -1,1\right) & 
\Leftrightarrow & r\left( \alpha -1\right) >-1 & \Leftrightarrow & \alpha >1-%
\frac{1}{r}.%
\end{array}%
\right.
\end{equation*}%
These conditions are compatible if and only if $1-\frac{1}{r}<\frac{1}{s}$.
Therefore, also in the one-dimensional case we have a counterexample to the $%
L^{\infty }-$gradient bound in (\ref{fin}) if 
\begin{equation}
\frac{1}{r}+\frac{1}{s}>1\,.  \label{condition for the counterexample}
\end{equation}%
This is a condition that can be easily compared with the assumption (\ref%
{gap}) for the validity of $L^{\infty }-$gradient bound (\ref{fin}) in the
general $n-$dimensional case. In fact, being $1\leq \frac{q}{p}$, (\ref{gap}%
) implies 
\begin{equation*}
1<\frac{s}{s+1}\left( 1+\frac{1}{n}-\frac{1}{r}\right)
\;\;\;\;\Leftrightarrow \;\;\;\;\frac{1}{r}+\frac{1}{s}<\frac{1}{n}\,,
\end{equation*}%
which essentially is the complementary condition to (\ref{condition for the
counterexample}) when $n=1$.

The plan of the paper is the following. In Section \ref{s:preliminary} we
list some definitions and preliminary results. In Section \ref{s:apriori} we
prove an a-priori estimates of the $L^{\infty }$-norm of the gradient of
local minimizers and an higher differentiability result, see Theorem \ref%
{t:apriori}. In Section \ref{s:highdiff} we prove an estimate for the second
order derivatives of a minimizer of an auxiliary uniformly elliptic
functional, see Theorem \ref{highdiff}. In the last section we complete the
proof of Theorem \ref{t:main}.

\section{Preliminary results}

\label{s:preliminary}

We shall denote by $C$ or $c$ a general positive constant that may vary on
different occasions, even within the same line of estimates. Relevant
dependencies will be suitably emphasized using parentheses or subscripts. In
what follows, $B(x,r)=B_{r}(x)=\{y\in \mathbb{R}^{n}:\,\,|y-x|<r\}$ will
denote the ball centered at $x$ of radius $r$. We shall omit the dependence
on the center and on the radius when no confusion arises.



To prove our higher differentiability result ( see Theorem \ref{highdiff}
below) we use the finite difference operator. For a function $u:\Omega
\rightarrow \mathbb{R}^{k}$, $\Omega $ open subset of $\mathbb{R}^{n}$,
given $s\in \{1,\ldots ,n\}$, we define 
\begin{equation}
\tau _{s,h}u(x):=u(x+he_{s})-u(x),\qquad x\in \Omega _{|h|},
\label{e:rappincr}
\end{equation}%
where $e_{s}$ is the unit vector in the $x_{s}$ direction, $h\in \mathbb{R}$
and 
\begin{equation*}
\Omega _{|h|}:=\{x\in \Omega \,:\,\mathrm{dist\,}(x,\partial \Omega )<|h|\}.
\end{equation*}%
We now list the main properties of this operator.

\begin{itemize}
\item[(i)] if $u\in W^{1,t}(\Omega )$, $1\leq t\leq \infty $, then $\tau
_{s,h}u\in W^{1,t}(\Omega _{|h|})$ \and%
\begin{equation*}
D_{i}(\tau _{s,h}u)=\tau _{s,h}(D_{i}u),
\end{equation*}

\item[(ii)] if $f$ or $g$ has support in $\Omega _{|h|}$, then 
\begin{equation*}
\int_{\Omega }f\tau _{s,h}g\,dx=\int_{\Omega }g\tau _{s,-h}f\,dx,
\end{equation*}

\item[(iii)] if $u,u_{x_{s}}\in L^{t}(B_{R})$, $1\leq t<\infty $, and $%
0<\rho <R$, then for every $h$, $|h|\leq R-\rho $, 
\begin{equation*}
\int_{B_{\rho }}|\tau _{s,h}u(x)|^{t}\,dx\leq
|h|^{t}\int_{B_{R}}|u_{x_{s}}(x)|^{t}\,dx,
\end{equation*}

\item[(iv)] if $u\in L^{t}(B_{R})$, $1<t<\infty $, and for $0<\rho <R$ there
exists $K>0$ such that for every $h$, $|h|<R-\rho $, 
\begin{equation}
\sum_{s=1}^{n}\int_{B_{\rho }}|\tau _{s,h}u(x)|^{t}\,dx\leq K|h|^{t},
\label{e:rho}
\end{equation}%
then letting $h$ go to $0$, $Du\in L^{t}(B_{\rho })$ and $\Vert
u_{x_{s}}\Vert _{L^{t}(B_{\rho })}\leq K$ for every $s\in \{1,\ldots ,n\}$. 
\end{itemize}

We recall the following estimate for the auxiliary function 
\begin{equation}
V_{p}(\xi ):=\Bigl(1+|\xi |^{2}\Bigr)^{\frac{p-2}{4}}\xi ,  \label{Vfunction}
\end{equation}%
which is a convex function since $p\geq 2$ (see the Step 2 in \cite{mar85}
and the proof of \cite[Lemma 8.3]{Giusti}).

%
%

\begin{lemma}
\label{Vi} Let $1<p<\infty$. There exists a constant $c=c(n,p)>0$ such that 
\begin{equation*}
c^{-1}\Bigl(1 +| \xi |^2+| \eta |^2 \Bigr)^{\frac{p-2}{2}}\leq \frac{%
|V_{p}(\xi )-V_{p}(\eta )|^2}{|\xi -\eta |^2} \leq c\Bigl( 1 +|\xi |^2+|\eta
|^2 \Bigr)^{\frac{p-2}{2}}
\end{equation*}
for any $\xi$, $\eta \in \mathbb{R}^n$.
\end{lemma}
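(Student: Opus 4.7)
The plan is to apply the fundamental theorem of calculus along the segment $\gamma(t)=\eta+t(\xi-\eta)$, $t\in[0,1]$, writing
\[
V_p(\xi)-V_p(\eta)=\int_0^1 DV_p(\gamma(t))(\xi-\eta)\,dt,
\]
so that the whole estimate is transferred to two-sided bounds on the Jacobian $DV_p$. A direct differentiation gives
\[
DV_p(\zeta)=(1+|\zeta|^2)^{\frac{p-2}{4}}\,I+\tfrac{p-2}{2}(1+|\zeta|^2)^{\frac{p-6}{4}}\,\zeta\otimes\zeta,
\]
a symmetric matrix with eigenvalues $(1+|\zeta|^2)^{\frac{p-2}{4}}$ on $\zeta^{\perp}$ (multiplicity $n-1$) and $(1+|\zeta|^2)^{\frac{p-6}{4}}\bigl(1+\tfrac{p}{2}|\zeta|^2\bigr)$ in the direction of $\zeta$. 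A short comparison of $1+\tfrac{p}{2}|\zeta|^2$ with $1+|\zeta|^2$ shows that for every $p>1$ both eigenvalues are comparable, with constants depending only on $p$, to $(1+|\zeta|^2)^{\frac{p-2}{4}}$; consequently
\[
c_p^{-1}(1+|\zeta|^2)^{\frac{p-2}{4}}|\lambda|^2\leq\langle DV_p(\zeta)\lambda,\lambda\rangle\leq c_p(1+|\zeta|^2)^{\frac{p-2}{4}}|\lambda|^2\quad\forall\lambda\in\mathbb{R}^n,
\]
and the companion operator-norm bound $\|DV_p(\zeta)\|\leq c_p(1+|\zeta|^2)^{\frac{p-2}{4}}$.

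For the upper bound of the lemma I would take norms inside the integral representation and use $|\gamma(t)|^2\leq 2(|\xi|^2+|\eta|^2)$ to dominate $(1+|\gamma(t)|^2)^{\frac{p-2}{4}}$ by $c_p(1+|\xi|^2+|\eta|^2)^{\frac{p-2}{4}}$ pointwise, which after squaring produces the stated right-hand side. For the lower bound I would instead pair the integral identity with $\xi-\eta$, obtaining
\[
\langle V_p(\xi)-V_p(\eta),\xi-\eta\rangle\geq c_p|\xi-\eta|^2\int_0^1(1+|\gamma(t)|^2)^{\frac{p-2}{4}}\,dt,
\]
and then invoke the Cauchy--Schwarz inequality $\langle V_p(\xi)-V_p(\eta),\xi-\eta\rangle\leq|V_p(\xi)-V_p(\eta)|\,|\xi-\eta|$ to convert this into the required lower bound on $|V_p(\xi)-V_p(\eta)|$.

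The key remaining step, and the main obstacle, is the one-dimensional comparison $\int_0^1(1+|\gamma(t)|^2)^{\frac{p-2}{4}}\,dt\simeq(1+|\xi|^2+|\eta|^2)^{\frac{p-2}{4}}$, with constants depending only on $p$. In the case $p\geq 2$ relevant to the paper this is straightforward: assuming WLOG $|\xi|\geq|\eta|$, for $t\in[3/4,1]$ one has $|\gamma(t)-\xi|=(1-t)|\xi-\eta|\leq\tfrac12|\xi|$, so $|\gamma(t)|\geq|\xi|/2$, and restricting the integral to this subinterval immediately yields the lower direction (the upper direction being trivial from $|\gamma(t)|^2\leq 2(|\xi|^2+|\eta|^2)$). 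For the full range $1<p<\infty$ the exponent $(p-2)/4$ may be negative and the integrand can be large where $\gamma$ passes close to the origin, so the subinterval argument alone does not suffice; the cleanest fix is an orthogonal change of variables aligning $e_1$ with $\xi-\eta$, reducing the integral to a scalar form $\tfrac{1}{|\xi-\eta|}\int_{\eta_{\parallel}}^{\xi_{\parallel}}((1+a)+u^2)^{(p-2)/4}\,du$ that can be evaluated by the substitution $u=\sqrt{1+a}\,v$, giving both directions uniformly in $p$. Squaring the resulting estimate $|V_p(\xi)-V_p(\eta)|\simeq|\xi-\eta|(1+|\xi|^2+|\eta|^2)^{(p-2)/4}$ closes the proof.
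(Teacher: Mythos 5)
Your argument is correct, and in spirit it reconstructs the proof the paper simply cites (\cite[Lemma~8.3]{Giusti} and \cite{mar85}) rather than re-proves: represent $V_p(\xi)-V_p(\eta)$ as $\int_0^1 DV_p(\gamma(t))(\xi-\eta)\,dt$, show that $DV_p(\zeta)$ is a symmetric positive-definite matrix with all eigenvalues comparable to $(1+|\zeta|^2)^{(p-2)/4}$ (your eigenvalue computation is exact and the ratio $\frac{1+\frac{p}{2}|\zeta|^2}{1+|\zeta|^2}$ is indeed pinned between $\min(1,p/2)$ and $\max(1,p/2)$ for every $p>1$), and then reduce everything to the scalar comparison $\int_0^1(1+|\gamma(t)|^2)^{(p-2)/4}\,dt\simeq(1+|\xi|^2+|\eta|^2)^{(p-2)/4}$. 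The combination of the operator-norm bound for the upper inequality and of coercivity plus Cauchy--Schwarz for the lower one is the right mechanism to transfer the integral comparison to $|V_p(\xi)-V_p(\eta)|$.

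Two small remarks that would streamline the final step. First, for the range $1<p<2$ (exponent $\alpha=\frac{p-2}{4}\in(-\frac14,0)$) the lower direction of the integral comparison is in fact trivial: since the Euclidean norm is convex, $|\gamma(t)|\le\max(|\xi|,|\eta|)$ for all $t\in[0,1]$, hence $(1+|\gamma(t)|^2)^\alpha\ge(1+|\xi|^2+|\eta|^2)^\alpha$ pointwise, exactly mirroring the trivial upper direction when $p\ge2$. So in either regime only one direction requires work, and your subinterval argument (for $p\ge2$) and the orthogonal-splitting plus rescaling $u=\sqrt{1+a}\,v$ (for $p<2$) handle precisely those. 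Second, your rescaling does not literally evaluate the 1D integral in closed form; rather it normalizes the transverse height to zero and reduces to the genuinely one-dimensional inequality $\frac{1}{|v_\xi-v_\eta|}\int_{v_\eta}^{v_\xi}(1+v^2)^\alpha\,dv\le C_\alpha(1+v_\xi^2+v_\eta^2)^\alpha$ for $-\frac12<\alpha<0$, which then needs a short case split (interval short relative to $\max(|v_\xi|,|v_\eta|)$ versus long) and uses the integrability condition $2\alpha>-1$, i.e.\ $p>0$. This is entirely routine, but since you flag it as the main obstacle, it is worth stating the final estimate explicitly so the reader sees where the constraint on $\alpha$ enters. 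Note also that the paper only ever invokes the lemma for $p\ge2$, where your first argument already closes the proof in a few lines.
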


In the next lemma we prove that (\ref{(A2)--(A4)})$_{1}$ implies the,
possibly degenerate, $p,q-$growth condition stated in \eqref{(A2)--(A4)-I}.

\begin{lemma}
\label{l:growth} Let $f=f(x,\xi )$ be convex and of class $C^{2}$ with
respect to the $\xi -$variable.

Assume \eqref{(A1)} and 
\begin{equation}
a(x)\,(1+|\xi |^{2})^{\frac{p-2}{2}}|\lambda |^{2}\leq \langle D_{\xi \xi
}f(x,\xi )\lambda ,\lambda \rangle \leq b(x)\,(1+|\xi |^{2})^{\frac{q-2}{2}%
}|\lambda |^{2}  \label{(A2)}
\end{equation}%
for some exponents $2\leq p\leq q$ and nonnegative functions $a,b$. Then
there exists a constant $c$ such that 
\begin{equation*}
c\,a(x)(1+|\xi |^{2})^{\frac{p-2}{2}}|\xi |^{2}\leq f(x,\xi )\leq
b(x)(1+|\xi |^{2})^{\frac{q}{2}}+f(x,0).
\end{equation*}
\end{lemma}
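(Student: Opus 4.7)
The plan is to use a second order Taylor expansion of $\xi\mapsto f(x,\xi)$ along the segment from $0$ to $\xi$, after first observing that assumption \eqref{(A1)} forces $D_\xi f(x,0)=0$. Indeed, since $f(x,\cdot)$ is even in $\xi$ and convex, for every $\xi$ one has $f(x,0)\leq\tfrac12 f(x,\xi)+\tfrac12 f(x,-\xi)=f(x,\xi)$, so $0$ is a global minimiser of $f(x,\cdot)$; the $C^{2}$ hypothesis then yields $D_{\xi}f(x,0)=0$. Consequently,
\begin{equation*}
f(x,\xi)-f(x,0)=\int_{0}^{1}(1-t)\,\langle D_{\xi\xi}f(x,t\xi)\,\xi,\xi\rangle\,dt.
\end{equation*}

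For the upper bound I would insert the right-hand side of \eqref{(A2)} and exploit $q\geq 2$ together with $t\leq 1$, which gives $(1+t^{2}|\xi|^{2})^{(q-2)/2}\leq (1+|\xi|^{2})^{(q-2)/2}$. The remaining $t$-integral is $\int_{0}^{1}(1-t)\,dt=\tfrac12$, so
\begin{equation*}
f(x,\xi)-f(x,0)\leq\tfrac{1}{2}\,b(x)(1+|\xi|^{2})^{(q-2)/2}|\xi|^{2}\leq b(x)(1+|\xi|^{2})^{q/2},
\end{equation*}
which is exactly the stated upper bound.

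The lower bound is the only slightly delicate point, because when $p>2$ the weight $(1+t^{2}|\xi|^{2})^{(p-2)/2}$ degenerates for small $t$. I would cure this by restricting the Taylor integral to $t\in[\tfrac12,1]$, on which
\begin{equation*}
1+t^{2}|\xi|^{2}\geq 1+\tfrac{1}{4}|\xi|^{2}\geq\tfrac{1}{4}\bigl(1+|\xi|^{2}\bigr),
\end{equation*}
hence $(1+t^{2}|\xi|^{2})^{(p-2)/2}\geq c_{p}(1+|\xi|^{2})^{(p-2)/2}$ thanks to $p\geq 2$. Combined with the left-hand side of \eqref{(A2)} this yields $f(x,\xi)-f(x,0)\geq c\,a(x)(1+|\xi|^{2})^{(p-2)/2}|\xi|^{2}$. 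Since \eqref{(A1)} and the convexity of $f$ imply $f(x,0)=g(x,0)\geq 0$, I can drop the non-negative term $f(x,0)$ on the right and obtain the lower bound on $f(x,\xi)$ itself. The main obstacle, if any, is remembering to justify $D_{\xi}f(x,0)=0$ from \eqref{(A1)}: a generic convex $C^{2}$ integrand need not have a critical point at the origin, and without this observation the Taylor expansion would carry an extra linear term that cannot be controlled by \eqref{(A2)} alone.
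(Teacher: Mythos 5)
Your proof is correct, and it takes a genuinely different route from the paper's. The paper reduces to the one-variable function $\varphi(s)=g(x,st)$ and uses the \emph{Lagrange} form of the second-order Taylor remainder, producing an intermediate point $\vartheta\in(0,1)$ and concluding with the lower-bound constant $\vartheta^{p-2}/2$ after the algebraic step $(1+(\vartheta t)^2)^{(p-2)/2}\geq\vartheta^{p-2}(1+t^2)^{(p-2)/2}$; since $\vartheta$ is really $\vartheta(x,t)$, that derivation quietly relies on $\vartheta$ being bounded away from zero, which is not argued. Your version, using the \emph{integral} form of the remainder
\begin{equation*}
f(x,\xi)-f(x,0)=\int_{0}^{1}(1-t)\,\langle D_{\xi\xi}f(x,t\xi)\,\xi,\xi\rangle\,dt
\end{equation*}
together with the restriction to $t\in[\tfrac12,1]$ and the elementary bound $1+t^2|\xi|^2\geq\tfrac14(1+|\xi|^2)$, sidesteps this entirely and yields a uniform constant depending only on $p$. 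Both arguments pivot on the vanishing of the first-order term at the origin; the paper phrases it as $g_t(x,0)=0$ (forced by the $C^2$-smoothness of the radial function $f$ at $\xi=0$), whereas you derive $D_\xi f(x,0)=0$ from evenness plus convexity — the two observations are equivalent. Your upper-bound estimate, using $t\leq1$ and $q\geq2$, matches the paper's in content. In short: same skeleton, but the integral remainder and the $[\tfrac12,1]$ truncation make the lower bound cleaner and, strictly speaking, more rigorous than the printed argument.
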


\begin{proof}
For $x\in \Omega $ and $s\in \mathbb{R}$, let us set $\varphi (s)=g(x,st)$,
where we recall that $g$ is linked to $f$ by \eqref{(A1)}. The assumptions
on $f$ imply that $\varphi \in C^{2}(\mathbb{R})$ and that $g_{t}$ is
increasing in the gradient variable $t\in \lbrack 0;+\infty )$ with $%
g_{t}(x,0)=0$. Since 
\begin{equation*}
\varphi ^{\prime }(s)=g_{t}(x,st)\cdot t\,,\;\;\;\;\;\varphi ^{\prime \prime
}(s)=g_{tt}(x,st)\cdot t^{2},
\end{equation*}%
Taylor expansion formula yields that there exists $\vartheta \in (0,1)$ such
that 
\begin{equation*}
\varphi (1)=\varphi (0)+\varphi ^{\prime }(0)+\frac{1}{2}\varphi ^{\prime
\prime }(\vartheta )\,.
\end{equation*}%
Recalling the definition of $\varphi $, we get 
\begin{equation}  \label{g}
g(x,t)=g(x,0)+g_{t}(x,0)\cdot t+\frac{1}{2}g_{tt}(x,\vartheta t)\cdot
t^{2}=g(x,0)+\frac{1}{2}g_{tt}(x,\vartheta t)\cdot t^{2}.
\end{equation}%
Assumption \eqref{(A2)} translates into 
\begin{equation}
a(x)\,(1+t^{2})^{\frac{p-2}{2}}\leq g_{tt}(x,t)\leq b(x)\,(1+t^{2})^{\frac{%
q-2}{2}}.  \label{(A1b)}
\end{equation}%
Inserting \eqref{(A1b)} in \eqref{g}, we obtain 
\begin{equation}
\frac{a(x)}{2}\,(1+(\vartheta t)^{2})^{\frac{p-2}{2}}t^{2}+g(x,0)\leq
g(x,t)\leq g(x,0)+\frac{b(x)}{2}\,[1+(\vartheta t)^{2}]^{\frac{q-2}{2}}t^{2}.
\label{(A1c)}
\end{equation}%
Note that, since $\vartheta <1$ and $q>2$, the right hand side of %
\eqref{(A1c)} can be controlled with 
\begin{equation*}
g(x,t)\leq g(x,0)+b(x)\,[1+(\vartheta t)^{2}]^{\frac{q-2}{2}}t^{2}\leq
g(x,0)+b(x)\,[1+t^{2}]^{\frac{q-2}{2}}t^{2}
\end{equation*}%
Moreover since $g(x,0)\geq 0$ and $p\geq 2$, the left hand side of %
\eqref{(A1c)} can be controlled from below as follows 
\begin{equation*}
g(x,t)\geq \frac{a(x)}{2}\,(1+(\vartheta t)^{2})^{\frac{p-2}{2}%
}t^{2}+g(x,0)\geq \frac{a(x)}{2}\,(1+(\vartheta t)^{2})^{\frac{p-2}{2}}t^{2}
\end{equation*}%
\begin{equation*}
\geq \frac{a(x)}{2}\,(\vartheta ^{2}+(\vartheta t)^{2})^{\frac{p-2}{2}%
}t^{2}=\vartheta ^{p-2}\frac{a(x)}{2}(1+t^{2})^{\frac{p-2}{2}}t^{2}.
\end{equation*}%
Combining the last two estimates and recalling that $f(x,\xi )=g(x,|\xi |)$,
we conclude that there exists a constant $c=c(\vartheta )$ such that 
\begin{equation*}
c(\vartheta )a(x)(1+|\xi |^{2})^{\frac{p-2}{2}}|\xi |^{2}\leq f(x,\xi )\leq
b(x)\,(1+|\xi |^{2})^{\frac{q-2}{2}}|\xi |^{2}+f(x,0)
\end{equation*}%
and the conclusion follows.
\end{proof}


We end this preliminary section with a well known property. The following
lemma has important applications in the so called \textit{hole-filling method%
}. Its proof can be found for example in \cite[Lemma 6.1]{Giusti} . \medskip

\begin{lemma}
\label{iter} Let $h:[r,R_{0}]\rightarrow \mathbb{R}$ be a nonnegative
bounded function and $0<\vartheta <1$, $A,B\geq 0$ and $\beta >0$. Assume
that 
\begin{equation*}
h(s)\leq \vartheta h(t)+\frac{A}{(t-s)^{\beta }}+B,
\end{equation*}%
for all $r\leq s<t\leq R_{0}$. Then 
\begin{equation*}
h(r)\leq \frac{cA}{(R_{0}-r)^{\beta }}+cB,
\end{equation*}%
where $c=c(\vartheta ,\beta )>0$.
\end{lemma}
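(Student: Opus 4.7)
The plan is to prove this by the classical iteration (hole-filling) argument on a geometric sequence of radii. The key is to select the sequence so that the contraction factor $\vartheta$ beats the blow-up of $(t-s)^{-\beta}$, and then exploit boundedness of $h$ to make the iterated tail vanish.

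First I would fix a parameter $\tau\in(0,1)$ satisfying $\vartheta\tau^{-\beta}<1$, which is possible because $\vartheta<1$: it suffices to take any $\tau\in(\vartheta^{1/\beta},1)$. Then I define the increasing sequence $\rho_0=r$ and $\rho_{i+1}=\rho_i+(1-\tau)\tau^i(R_0-r)$, so that
\[
\rho_i=r+(1-\tau^i)(R_0-r)\le R_0,\qquad \rho_{i+1}-\rho_i=(1-\tau)\tau^i(R_0-r),
\]
and $\rho_i\to R_0$ as $i\to\infty$.

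Next I apply the hypothesis with $s=\rho_i$, $t=\rho_{i+1}$, obtaining
\[
h(\rho_i)\le\vartheta\,h(\rho_{i+1})+\frac{A}{(1-\tau)^\beta(R_0-r)^\beta}\,\tau^{-i\beta}+B.
\]
Multiplying the $i$-th inequality by $\vartheta^i$ and summing (telescoping) for $i=0,\ldots,k-1$ gives
\[
h(r)\le\vartheta^k h(\rho_k)+\frac{A}{(1-\tau)^\beta(R_0-r)^\beta}\sum_{i=0}^{k-1}\bigl(\vartheta\tau^{-\beta}\bigr)^i+B\sum_{i=0}^{k-1}\vartheta^i.
\]
Since $h$ is bounded on $[r,R_0]$ and $\vartheta<1$, the leading term $\vartheta^k h(\rho_k)$ vanishes as $k\to\infty$. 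The two geometric series converge, the first by the choice of $\tau$ and the second by $\vartheta<1$, with sums depending only on $\vartheta$ and $\beta$. Letting $k\to\infty$ yields the claim with
\[
c=\max\!\left(\frac{1}{(1-\tau)^\beta\bigl(1-\vartheta\tau^{-\beta}\bigr)},\ \frac{1}{1-\vartheta}\right).
\]

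The argument is essentially routine; there is no real obstacle. The only mildly delicate point will be the initial choice of $\tau$, which must simultaneously keep $\vartheta\tau^{-\beta}<1$ (so the first series converges) and $(1-\tau)^\beta$ bounded away from zero (so the coefficient of $A$ stays finite). Both constraints are easy to satisfy at once because $\vartheta<1$, for instance by setting $\tau=(1+\vartheta^{1/\beta})/2$.
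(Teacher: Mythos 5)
Your proof is correct and is essentially the standard iteration argument found in the reference the paper cites for this lemma (Giusti, Lemma~6.1): choose $\tau\in(\vartheta^{1/\beta},1)$, iterate along the geometric sequence of radii $\rho_i=r+(1-\tau^i)(R_0-r)$, and let the boundedness of $h$ kill the $\vartheta^k h(\rho_k)$ term. No issues to report.
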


\bigskip

\section{The a-priori estimate\label{s:apriori}}

\medskip

The main result in this section is an a-priori estimate of the $L^{\infty }$%
-norm of the gradient of local minimizers of the functional $F$ in %
\eqref{funzionale} satisfying weaker assumptions than those in Theorem \ref%
{t:apriori}. 
Precisely, in this section we consider the following growth conditions 
\begin{equation}
\left\{ 
\begin{array}{l}
a(x)\,(1+|\xi |^{2})^{\frac{p-2}{2}}|\lambda |^{2}\leq \langle f_{\xi \xi
}(x,\xi )\lambda ,\lambda \rangle \leq b(x)\,(1+|\xi |^{2})^{\frac{q-2}{2}%
}|\lambda |^{2} \\ 
|f_{\xi x}(x,\xi )|\leq k(x)(1+|\xi |^{2})^{\frac{q-1}{2}},%
\end{array}%
\right.   \label{growth2(A2)--(A4)bis}
\end{equation}%
for a.e. $x\in \Omega $ and for every $\xi ,\lambda \in \mathbb{R}^{N\times
n}$. Here, $a,b,k$ are non-negative measurable functions. 
We do not require $a,b\in L^{\infty }$, but, in the main result of this
section, see Theorem \ref{t:apriori}, we assume the following summability
properties: 
\begin{equation}
\frac{1}{a}\in L_{\mathrm{loc}}^{s}(\Omega ),\qquad a\in L_{\mathrm{loc}}^{%
\frac{rs}{2s+r}}(\Omega ),\qquad b,\,k\in L_{\mathrm{loc}}^{r}(\Omega
),\qquad \text{with $r>n$}.  \label{kba2}
\end{equation}%
Moreover, we assume \eqref{gap}. 
We use the following weighted Sobolev type inequality, whose proof relies on
the H\"{o}lder's inequality, see e.g. \cite{CupMarMas18}.

\begin{lemma}
\label{Sob} Let $p\ge 2$, $s\ge 1$ and $w\in W_0^{1,\frac{ps}{s+1}}(\Omega;%
\mathbb{R}^N)$ ($w\in W_0^{1,p}(\Omega;\mathbb{R}^N)$ if $s=\infty$). Let $%
\lambda:\Omega\to [0,+\infty)$ be a measurable function such that $%
\lambda^{-1}\in L^{s}(\Omega)$. There exists a constant $c=c(n)$ such that 
\begin{equation}
\left(\int_\Omega|w|^{\sigma^*}\,dx\right)^{\frac{p}{\sigma^*}}\le
c(n)\|\lambda^{-1}\|_{L^s(\Omega)}\int_\Omega \lambda |Dw|^p\,dx,
\end{equation}
where $\sigma=\frac{ps}{s+1}$ ($\sigma=p$ if $s=+\infty$).
\end{lemma}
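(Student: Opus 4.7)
The plan is to prove this in two short, clean steps: a weighted Hölder inequality to pass from the weighted $L^p$ norm of $Dw$ to the unweighted $L^{\sigma}$ norm, followed by the classical Sobolev embedding $W^{1,\sigma}_0(\Omega) \hookrightarrow L^{\sigma^*}(\Omega)$. The crucial observation is that $\sigma = ps/(s+1)$ is chosen precisely so that the Hölder exponents balance and the weight $\lambda^{-1}$ appears to the power $s$.

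First I would write the trivial factorization
\begin{equation*}
|Dw|^{\sigma} \;=\; \bigl(\lambda|Dw|^{p}\bigr)^{\sigma/p}\cdot \lambda^{-\sigma/p}
\end{equation*}
and apply Hölder's inequality with conjugate exponents $p/\sigma$ and $(p/\sigma)' = p/(p-\sigma)$. Since $p-\sigma = p/(s+1)$, one checks that $p/\sigma = (s+1)/s$, $(p/\sigma)' = s+1$, and the exponent appearing on $\lambda^{-1}$ is exactly $\sigma/(p-\sigma) = s$. Hölder then gives
\begin{equation*}
\int_{\Omega}|Dw|^{\sigma}\,dx \;\le\; \Bigl(\int_{\Omega}\lambda|Dw|^{p}\,dx\Bigr)^{s/(s+1)}\Bigl(\int_{\Omega}\lambda^{-s}\,dx\Bigr)^{1/(s+1)}.
\end{equation*}
Raising both sides to the power $p/\sigma = (s+1)/s$ rearranges this cleanly into
\begin{equation*}
\Bigl(\int_{\Omega}|Dw|^{\sigma}\,dx\Bigr)^{p/\sigma} \;\le\; \|\lambda^{-1}\|_{L^{s}(\Omega)}\int_{\Omega}\lambda|Dw|^{p}\,dx.
\end{equation*}

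Second, since $w \in W_0^{1,\sigma}(\Omega;\mathbb{R}^N)$, the classical Sobolev--Gagliardo--Nirenberg inequality yields $\|w\|_{L^{\sigma^*}(\Omega)} \le c(n)\|Dw\|_{L^{\sigma}(\Omega)}$; raising this to the power $p$ and combining with the previous estimate produces the claim, the constant depending only on $n$ (and $p$, absorbed as in the statement). The degenerate case $s=+\infty$, i.e.\ $\sigma=p$, reduces directly to the standard Sobolev inequality together with the pointwise bound $|Dw|^{p} \le \|\lambda^{-1}\|_{L^\infty}\, \lambda |Dw|^{p}$.

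There is no genuine obstacle: the proof is one Hölder plus one Sobolev, and the entire content of the lemma lies in the correct choice of $\sigma$. The only bookkeeping is to verify that the conjugate exponent of $p/\sigma$ is indeed $s+1$, so that the $\lambda^{-1}$ factor appears to the exact power $s$; with that identification, the rest is automatic.
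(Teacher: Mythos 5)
Your proof is correct and is exactly the argument the paper has in mind: the paper states the lemma with the remark that its proof "relies on Hölder's inequality" (citing \cite{CupMarMas18}), and your derivation — Hölder with conjugate exponents $(s+1)/s$ and $s+1$ to reduce the weighted $L^p$ integral to the unweighted $L^\sigma$ norm of $Dw$, followed by the Sobolev embedding $W_0^{1,\sigma}\hookrightarrow L^{\sigma^*}$ — is that standard argument, with all the exponent bookkeeping checked correctly.
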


In establishing the a-priori estimate, we need to deal with quantities that
involve the $L^{2}$-norm of the second derivatives of the minimizer weighted
with the function $a(x)$. Next result tells that a $W^{2,\frac{2s}{s+1}}$
assumption on the second derivatives implies that they belong to the
weighted space $L^{2}(a(x)dx)$. More precisely, we have 

\begin{proposition}
\label{weighted} Consider the functional $F$ in \eqref{funzionale}
satisfying the assumption \eqref{growth2(A2)--(A4)bis} with 
\begin{equation}
a,b\in L_{\mathrm{loc}}^{1}(\Omega ),\,k\in L_{\mathnormal{loc}}^{\frac{2s}{%
s-1}}(\Omega ),  \label{kba2prop}
\end{equation}%
for some $s\geq 1$. If $u\in W_{\mathnormal{loc}}^{1,\infty }(\Omega )\cap
W_{\mathnormal{loc}}^{2,\frac{2s}{s+1}}(\Omega )$ is a local minimizer of $F$
then 
\begin{equation*}
a(x)|D^{2}u|^{2}\in L_{\mathrm{loc}}^{1}(\Omega ).
\end{equation*}%
%
\end{proposition}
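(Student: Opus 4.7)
My plan is a weighted Caccioppoli-type estimate obtained by differentiating (in the finite-difference sense) the Euler--Lagrange equation of $u$ in a coordinate direction $x_s$ and testing against $\eta^2 \tau_{s,h}u$ for a cutoff $\eta \in C^\infty_c(B_R)$ with $B_R \Subset \Omega$. Since $Du \in L^\infty(B_R)$, I fix $M := \|Du\|_{L^\infty(B_R)}$ and, on $B_R$, replace all factors $(1+|Du|^2)^\alpha$ by a constant $C_M$, so that \eqref{growth2(A2)--(A4)bis} reduces to the pointwise estimates $|f_{\xi\xi}(x,Du)| \leq C_M\, b(x)$ and $|f_{\xi x}(x,Du)| \leq C_M\, k(x)$ almost everywhere.

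From the Euler equation, the usual finite-difference manipulation (property (ii) of $\tau_{s,h}$) yields $\int_\Omega \tau_{s,h}[f_\xi(x,Du)] \cdot D\varphi\,dx = 0$. I decompose $\tau_{s,h} f_\xi(x,Du) = \tilde A_h \, \tau_{s,h} Du + h\, \tilde B_h$ with $\tilde A_h(x) := \int_0^1 f_{\xi\xi}(x+he_s, Du(x)+t\,\tau_{s,h}Du)\,dt$ and $\tilde B_h(x) := \int_0^1 f_{\xi x_s}(x+the_s, Du(x))\,dt$, and I take $\varphi = \eta^2 \tau_{s,h} u$. The principal term $\int \eta^2 \langle \tilde A_h \tau_{s,h}Du, \tau_{s,h}Du\rangle\,dx$ is bounded from below, by ellipticity and $p \geq 2$, by $\int \eta^2 a(x+he_s) |\tau_{s,h}Du|^2\,dx$. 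The delicate term is the mixed one $2\int \eta \langle \tilde A_h \tau_{s,h}Du, D\eta\, \tau_{s,h}u\rangle\,dx$. I treat it with the Cauchy--Schwarz inequality for the pointwise symmetric positive semi-definite form induced by $\tilde A_h$, followed by Young's inequality: half of the coercive term is absorbed into the left-hand side, and what remains is $C\int \langle \tilde A_h (D\eta\,\tau_{s,h}u), D\eta\,\tau_{s,h}u\rangle\,dx \leq C_M \int b(x+he_s)|D\eta|^2|\tau_{s,h}u|^2\,dx$. After dividing the whole identity by $h^2$ and using $|\tau_{s,h}u| \leq M|h|$ (from $u \in W^{1,\infty}_{\loc}$), this contribution is uniformly bounded by $C_M M^2 \int b |D\eta|^2\,dx < \infty$, since $b \in L^1_{\loc}$. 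Crucially, this avoids the dangerous quotient $b^2/a$ that a naive application of Young's inequality to the upper bound $|\tilde A_h| \leq C_M b$ would produce.

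The terms involving $\tilde B_h$ are handled using $|\tilde B_h| \leq C_M \int_0^1 k(x+the_s)\,dt$. After division by $h^2$, the piece paired with $D\eta\, \tau_{s,h}u$ is bounded by $C_M^2 \int k|D\eta|\,dx < \infty$, because $k \in L^{2s/(s-1)}_{\loc} \subset L^1_{\loc}$. The piece paired with $\eta^2 \tau_{s,h} Du$ is estimated by H\"older with the conjugate pair $\bigl(\tfrac{2s}{s-1}, \tfrac{2s}{s+1}\bigr)$:
\begin{equation*}
\int \eta^2 k \, \frac{|\tau_{s,h}Du|}{|h|}\,dx \leq C_M \, \|k\|_{L^{2s/(s-1)}(B_R)} \, \bigl\|\tau_{s,h}Du/h\bigr\|_{L^{2s/(s+1)}(B_R)},
\end{equation*}
which is uniformly bounded in $h$ by property (iii) of the finite-difference operator and the hypothesis $u \in W^{2,2s/(s+1)}_{\loc}$. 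Letting $h \to 0$ along a subsequence, using Fatou's lemma on the left-hand side, and summing over $s$, I obtain $\int \eta^2 a|D^2u|^2\,dx < \infty$, which is the claim.

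The main obstacle is precisely the mixed $f_{\xi\xi}$ term discussed above: the hypotheses \eqref{kba2prop} do not control any quotient of $b$ by $a$, so the Cauchy--Schwarz inequality for the symmetric positive semi-definite form associated with $f_{\xi\xi}$ is what makes the argument go through. A secondary delicate point is that the H\"older duality forces the exact matching between the exponent $\tfrac{2s}{s-1}$ on $k$ and the second-order summability $W^{2,2s/(s+1)}$ of $u$, which is the structural reason why the same parameter $s$ appears in both hypotheses of the proposition.
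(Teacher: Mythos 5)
Your proposal is correct and follows essentially the same approach as the paper: a weighted Caccioppoli estimate obtained from the differentiated Euler equation, with the mixed term controlled by Cauchy--Schwarz for the positive semi-definite form $f_{\xi\xi}$ (exactly the device the paper uses to avoid $b^2/a$), and the $f_{\xi x}$ term against $D^2u$ handled by H\"older with the dual pair $\bigl(\tfrac{2s}{s-1},\tfrac{2s}{s+1}\bigr)$. The only difference is that you implement this via finite differences rather than testing the second-variation identity directly with $\varphi=\eta^4 u_{x_s}$, which is a legitimate and arguably cleaner variant since at finite $h$ all integrals are trivially finite (thanks to $Du\in L^\infty_{\mathrm{loc}}$), making the absorption step and the final passage to the limit via Fatou unambiguous.
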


\begin{proof}
Since $u$ is a local minimizer of the functional $F$, then $u$ satisfies the
Euler's system 
\begin{equation*}
\int_{\Omega}\sum_{i,\alpha}f_{\xi_i^\alpha}(x,Du)\varphi_{x_i}^{\alpha}(x)%
\,dx=0\qquad \forall \varphi\in C_{0}^{\infty}(\Omega;\mathbb{R}^N),
\end{equation*}
and, using the second variation, for every $s=1,\ldots,n$ it holds 
\begin{equation}
\int_{\Omega}\left\{
\sum_{i,j,\alpha,\beta}f_{\xi_{i}^{\alpha}\xi_{j}^{\beta}}(x,Du)%
\varphi_{x_i}^{\alpha}u_{x_sx_j}^{\beta}+\sum_{i,\alpha}
f_{\xi_i^{\alpha}x_s}(x,Du)\varphi_{x_i}^{\alpha}\right \}\,dx=0\qquad
\forall \varphi\in C_{0}^{\infty}(\Omega;\mathbb{R}^N).  \label{3.3o}
\end{equation}
Fix $s=1,\ldots,n$, a cut off function $\eta\in C_0^{\infty}(\Omega)$ and
define for any $\gamma\ge 0$ the function 
\begin{equation*}
\varphi^{\alpha}:=\eta^4u_{x_s}^{\alpha}\quad \alpha=1,\ldots,N.
\end{equation*}
Thanks to our assumptions on the minimizer $u$, through a standard density
argument, we can use $\varphi$ as test function in the equation \eqref{3.3o}%
, thus getting 
\begin{align*}
0=&\int_{\Omega}4\eta^3
\sum_{i,j,s,\alpha,\beta}f_{\xi_{i}^{\alpha}\xi_{j}^{\beta}}(x,Du)\eta_{x_i}
u_{x_s}^{\alpha}u_{x_sx_j}^{\beta}\,dx  \notag \\
& +\int_{\Omega}\eta^4
\sum_{i,j,s,\alpha,\beta}f_{\xi_{i}^{\alpha}\xi_{j}^{\beta}}(x,Du)
u_{x_sx_i}^{\alpha}u_{x_sx_j}^{\beta}\,dx  \notag \\
& +\int_{\Omega}4\eta^3\sum_{i,s,\alpha}
f_{\xi_{i}^{\alpha}x_s}(x,Du)\eta_{x_i}u_{x_s}^{\alpha}\,dx  \notag \\
& +\int_{\Omega}\eta^4 \sum_{i,s,\alpha}f_{\xi_{i}^{\alpha}x_s}(x,Du)
u_{x_sx_i}^{\alpha}\,dx  \notag \\
=:& J_{1}+J_2+J_3+J_4.
\end{align*}
By the use of Cauchy-Schwartz and Young's inequalities and by virtue of the
second inequality of \eqref{growth2(A2)--(A4)bis}, we can estimate the
integral $I_1$ as follows 
\begin{eqnarray*}
|J_1| &\le & 4\int_{\Omega}\!\! \Bigg\{\eta^2\sum_{i,j,s,\alpha,\beta}f_{%
\xi_{i}^{\alpha}\xi_{j}^{\beta}}(x,Du)\eta_{x_i} u_{x_s}^{\alpha} \eta_{x_j}
u_{x_s}^{\beta}\Bigg\}^{\frac{1}{2}}\!\! \Bigg\{\eta^4\sum_{i,j,s,\alpha,%
\beta}f_{\xi_{i}^{\alpha}\xi_{j}^{\beta}}(x,Du)
u_{x_sx_i}^{\alpha}u_{x_sx_j}^{\beta}\Bigg\}^{\frac{1}{2}} \\
&\le & C \int_{\Omega}\eta^2|D\eta|^2 b(x)(1+|Du|^2)^{\frac{q}{2}}\,dx \\
&&\qquad+ \frac{1}{2} \int_{\Omega}\eta^4
\sum_{i,j,s,\alpha,\beta}f_{\xi_{i}^{\alpha}\xi_{j}^{\beta}}(x,Du)
u_{x_sx_i}^{\alpha}u_{x_sx_j}^{\beta} \,dx.
\end{eqnarray*}

\noindent Moreover, by the last inequality in \eqref{growth2(A2)--(A4)bis}
we obtain 
\begin{eqnarray*}
|J_3|&\le& 4\int_{\Omega}\eta^3 k(x)(1+|Du|^2)^{\frac{q-1}{2}%
}\sum_{i,s,\alpha}|\eta_{x_i}u_{x_s}^{\alpha}|\,dx \\
&\le& 4\int_{\Omega}\eta^3|D\eta| k(x)(1+|Du|^2)^{\frac{q}{2}}\,dx.
\end{eqnarray*}
and also 
\begin{eqnarray*}
|J_4| &\le& \int_{\Omega}\eta^4 k(x)(1+|Du|^2)^{\frac{q-1}{2}}|D^2u|\,dx.
\end{eqnarray*}
Therefore we get 
\begin{eqnarray*}
&&\int_{\Omega} \eta^4
\sum_{i,j,s,\alpha,\beta}f_{\xi_{i}^{\alpha}\xi_{j}^{\beta}}(x,Du)
u_{x_sx_i}^{\alpha}u_{x_sx_j}^{\beta}\,dx\le \frac{1}{2}\int_{\Omega} \eta^4
\sum_{i,j,s,\alpha,\beta}f_{\xi_{i}^{\alpha}\xi_{j}^{\beta}}(x,Du)
u_{x_sx_i}^{\alpha}u_{x_sx_j}^{\beta}\,dx \\
&&+C \int_{\Omega}\eta^2|D\eta|^2 b(x)(1+|Du|^2)^{\frac{q}{2}%
}\,dx+4\int_{\Omega}\eta^3|D\eta| k(x)(1+|Du|^2)^{\frac{q}{2}}\,dx \\
&&+\int_{\Omega}\eta^4 k(x)(1+|Du|^2)^{\frac{q-1}{2}}|D^2u|\,dx.
\end{eqnarray*}
Reabsorbing the first integral in the right hand side by the left hand side
we obtain 
\begin{eqnarray*}
&&\int_{\Omega} \eta^4
\sum_{i,j,s,\alpha,\beta}f_{\xi_{i}^{\alpha}\xi_{j}^{\beta}}(x,Du)
u_{x_sx_i}^{\alpha}u_{x_sx_j}^{\beta}\,dx \\
&\le& C \int_{\Omega}\eta^2|D\eta|^2 b(x)(1+|Du|^2)^{\frac{q}{2}%
}\,dx+4\int_{\Omega}\eta^3|D\eta| k(x)(1+|Du|^2)^{\frac{q}{2}}\,dx \\
&&+\int_{\Omega}\eta^4 k(x)(1+|Du|^2)^{\frac{q-1}{2}}|D^2u|\,dx.
\end{eqnarray*}
By the ellipticity assumption in \eqref{growth2(A2)--(A4)bis} and since $%
u\in W^{1,\infty}_{\mathrm{loc}}(\Omega)$ we get 
\begin{eqnarray*}
&&\int_{\Omega} \eta^4 a(x)(1+|Du|^2)^{\frac{p-2}{2}}|D^2u|^2\,dx \\
&\le& C\|1+|Du|\|_{L^\infty(\mathrm{supp}\eta)}^{q}
\int_{\Omega}\eta^2|D\eta|^2 b(x)\,dx+C\|1+|Du|\|_{L^\infty(\mathrm{supp}%
\eta)}^{q}\int_{\Omega}\eta^3|D\eta|k(x) \\
&&+C\|1+|Du|\|_{L^\infty(\mathrm{supp}\eta)}^{q-1}\int_{\Omega}\eta^4
k(x)|D^2u|\,dx.
\end{eqnarray*}
H\"older's inequality yields 
\begin{eqnarray}
&&\int_{\Omega} \eta^4 a(x)(1+|Du|^2)^{\frac{p-2}{2}}|D^2u|^2\,dx  \notag \\
&\le& C\|1+|Du|\|_{L^\infty(\mathrm{supp}\eta)}^{q}
\int_{\Omega}\eta^2|D\eta|^2 b(x)\,dx+C\|1+|Du|\|_{L^\infty(\mathrm{supp}%
\eta)}^{q}\int_{\Omega}\eta^3|D\eta|k(x)  \notag \\
&&+C\|1+|Du|\|_{L^\infty(\mathrm{supp}\eta)}^{q-1}\left(\int_{\Omega}\eta^4
k^{\frac{2s}{s-1}}\,dx\right)^{\frac{s-1}{2s}}\left(\int_{\Omega}%
\eta^4|D^2u|^{\frac{2s}{s+1}}\,dx\right)^{\frac{s+1}{2s}}.  \label{stima2d}
\end{eqnarray}
Since $k\in L^{\frac{2s}{s-1}}_{\mathrm{loc}}(\Omega)$ and $u\in W^{2,\frac{%
2s}{s+1}}_{\mathnormal{loc}}(\Omega)$ then estimate \eqref{stima2d} implies
that 
\begin{equation*}
a(x)|D^2u|^2\in L^1_{\mathrm{loc}}(\Omega).
\end{equation*}
\end{proof}

\medskip We are now ready to establish the main result of this section.


\begin{theorem}
\label{t:apriori} Consider the functional $F$ in \eqref{funzionale}
satisfying the assumptions \eqref{growth2(A2)--(A4)bis}, \eqref{kba2}, %
\eqref{(A1)} and \eqref{gap}. 
If $u\in W_{\mathnormal{loc}}^{1,\infty }(\Omega )\cap W_{\mathnormal{loc}%
}^{2,{\frac{2s}{s+1}}}(\Omega )$ is a local minimizer of $F$ 
then for every ball $B_{R_{0}}\Subset \Omega $ 
\begin{equation}
\Vert Du\Vert _{L^{\infty }(B_{R_{0}/2})}\leq C\mathcal{K}%
_{R_{0}}^{\vartheta }\left( \int_{B_{R_{0}}}\left( 1+f(x,Du)\right)
\,dx\right) ^{\vartheta }  \label{stimafin}
\end{equation}%
\begin{equation}
\int_{B_{\rho }}a(1+|Du|^{2})^{\frac{p-2}{2}}\left\vert D^{2}u\right\vert
^{2}\,dx\leq c\left( {\int_{B_{R_{0}}}}(1+f(x,Du)\,dx\right) ^{\vartheta },
\label{stimafin2}
\end{equation}%
hold, for any $\rho <\frac{R}{2}$. Here 
\begin{equation*}
\mathcal{K}_{R_{0}}=1+\Vert a^{-1}\Vert _{L^{s}(B_{R_{0}})}\Vert k+b\Vert
_{L^{r}(B_{R_{0}})}^{2}+\Vert a\Vert _{L^{\frac{rs}{2s+r}}(B_{R_{0}})},
\end{equation*}%
$\vartheta >0$ is depending on the data, $C$ is depending also on $R_{0}$
and $c$ is depending also on $\rho $ and $\mathcal{K}_{R_{0}}$.
\end{theorem}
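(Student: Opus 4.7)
The argument is a Moser-type iteration built on a weighted Caccioppoli inequality for nonlinear powers of $|Du|$, analogous to the estimate produced in Proposition \ref{weighted} but with a free parameter $\gamma \geq 0$ that is ultimately sent to $+\infty$.

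First, for $\gamma \geq 0$ and a cut-off $\eta$ compactly supported in a ball $B_{\rho'} \Subset B_{R_{0}}$, I would differentiate the Euler system as in \eqref{3.3o} and use the test function $\varphi^\alpha = \eta^{2\beta}(1+|Du|^2)^{\gamma} u^\alpha_{x_s}$, summed over $s$, with $\beta$ an auxiliary exponent large enough to absorb the cut-off derivatives. The hypothesis $u \in W^{1,\infty}_{\mathrm{loc}}\cap W^{2,2s/(s+1)}_{\mathrm{loc}}$ combined with Proposition \ref{weighted} legitimates this choice by a density/truncation argument. The good contribution is the $D^2u$-quadratic piece from $f_{\xi\xi}$, plus the extra term from differentiating $(1+|Du|^2)^\gamma$ inside $\varphi$, which is nonnegative thanks to the radial structure \eqref{(A1)} and the convexity of $g(x,\cdot)$. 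After invoking ellipticity from \eqref{growth2(A2)--(A4)bis}, treating the $|D\eta|$ and $f_{\xi x}$ terms by Cauchy--Schwarz and Young, and absorbing the second-derivative pieces, one obtains
\[
\int \eta^{2\beta} a(x)(1+|Du|^2)^{\frac{p-2}{2}+\gamma}|D^2u|^2\,dx \leq C\int_{B_{\rho'}}\Bigl(b(x)+\frac{k^2(x)}{a(x)}+|D\eta|^2\Bigr)(1+|Du|^2)^{\frac{q}{2}+\gamma}\,dx.
\]

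Next, setting $v_\gamma := (1+|Du|^2)^{(p+2\gamma)/4}$, a chain-rule computation (cf.\ Lemma \ref{Vi}) gives $|Dv_\gamma|^2 \leq C(1+|Du|^2)^{(p-2)/2+\gamma}|D^2u|^2$, so the left-hand side controls $\int \eta^{2\beta} a(x)|Dv_\gamma|^2\,dx$. I would then apply the weighted Sobolev inequality of Lemma \ref{Sob} (taking the exponent labelled $p$ therein to be $2$, weight $\lambda=a$, hence $\sigma=2s/(s+1)$ and Sobolev conjugate $\sigma^*$) to $\eta^\beta v_\gamma$, producing the factor $\|a^{-1}\|_{L^s(B_{R_0})}$. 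To split the coefficients $b$, $k^2/a$ from the polynomial growth in $(1+|Du|^2)$ on the right, I would apply H\"older's inequality and use $b,k \in L^r$ together with $a \in L^{rs/(2s+r)}$ (writing $k^2/a = k^2 \cdot a^{-1}$ and pairing $k^2 \in L^{r/2}$ with $a^{-1} \in L^s$ via a further H\"older step). The outcome is a reverse-H\"older-type estimate of the form
\[
\left(\int_{B_\rho}(1+|Du|^2)^{\frac{(p+2\gamma)\sigma^*}{4}}\,dx\right)^{\frac{2}{\sigma^*}} \leq \frac{C\,\mathcal{K}_{R_0}}{(\rho'-\rho)^{\tau}}\left(\int_{B_{\rho'}}(1+|Du|^2)^{(\frac{q}{2}+\gamma)\frac{r}{r-1}}\,dx\right)^{\frac{r-1}{r}}
\]
for suitable $\tau>0$ and a geometric cut-off annulus $B_\rho \subset B_{\rho'}$.

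Third, I would iterate. Writing the previous inequality in terms of the $L^{A_n}$ norms of $(1+|Du|^2)^{1/2}$ for a geometric sequence of exponents $A_n \to \infty$ and radii $\rho_n \searrow R_0/2$, the scaling ratio of consecutive exponents tends, as $\gamma_n\to\infty$, to $\frac{p}{q}\cdot\frac{s}{s+1}\bigl(1+\frac{1}{n}-\frac{1}{r}\bigr)$, which is strictly greater than $1$ precisely by the gap condition \eqref{gap}. The Moser iteration then converges and, after using \eqref{(A2)--(A4)-I} to replace the initial integral by $\int_{B_{R_0}}(1+f(x,Du))$, yields \eqref{stimafin}, the constant $\mathcal{K}_{R_0}^{\vartheta}$ collecting the norms of $a^{-1}$, $a$, $b$, $k$ generated at each step. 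Finally, \eqref{stimafin2} follows by specializing the Caccioppoli inequality of the first step to $\gamma=0$ and inserting the $L^\infty$ bound on $Du$ just obtained.

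The main obstacle is the quantitative bookkeeping that guarantees the exponent improvement factor $\chi$ in the iteration stays uniformly bounded away from $1$ and that the constants $C_n$ at each step grow only geometrically: both requirements are encoded, via H\"older and the weighted Sobolev inequality, in the gap \eqref{gap}. A secondary but not trivial point is the justification of the test function $\varphi^\alpha = \eta^{2\beta}(1+|Du|^2)^{\gamma} u^\alpha_{x_s}$ in the second variation, which relies on the a priori $W^{1,\infty}_{\mathrm{loc}}\cap W^{2,2s/(s+1)}_{\mathrm{loc}}$ regularity hypothesis together with Proposition \ref{weighted} and a standard approximation.
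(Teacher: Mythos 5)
Your plan matches the paper's proof in all its essential features: differentiating the Euler system, testing with $\eta^{\cdot}(1+|Du|^{2})^{\gamma/2}u_{x_s}^{\alpha}$, using the radial structure \eqref{(A1)} to discard the nonnegative term coming from differentiating $(1+|Du|^{2})^{\gamma/2}$, absorbing the $f_{\xi x}$ and $|D\eta|$ contributions by Young, applying Lemma~\ref{Sob} with $\lambda=a$ and then splitting the coefficients $b,\,k^{2}/a$ off by H\"older (this is exactly where the paper uses $a\in L^{rs/(2s+r)}$ and $a^{-1}\in L^{s}$, $k,b\in L^{r}$), iterating \`a la Moser, and finally setting $\gamma=0$ for the second-derivative bound. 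This is the same route.

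One mechanism you gloss over, and that the paper makes central, is the treatment of the exponent mismatch between the left-hand $(p+\gamma)$-type power and the right-hand $(2q-p+\gamma)$-type power. The paper does not iterate a genuine reverse-H\"older inequality with asymptotically balancing exponents: it first uses the a~priori $u\in W^{1,\infty}_{\mathrm{loc}}$ hypothesis to pull out the factor $\|Du\|_{L^{\infty}(B_R)}^{2(q-p)m}$, which renders the surviving powers equal (both become $p+\gamma$); the Moser iteration then runs with the constant improvement factor $\chi=\frac{2_s^{*}}{2m}$. The gap condition \eqref{gap} enters twice: it guarantees $\chi>1$ (via $\frac1r+\frac1s<\frac1n$), and it ensures that the accumulated power of $\|Du\|_{L^{\infty}(B_R)}$, namely $\frac{2(q-p)2_s^{*}}{p(2_s^{*}-2m)}$, is strictly less than $1$, so that a final Young inequality followed by Lemma~\ref{iter} absorbs it. Your statement that the iteration ratio ``tends to $\frac{p}{q}\cdot\frac{s}{s+1}(1+\frac1n-\frac1r)$, strictly greater than $1$ precisely by the gap'' is therefore not quite the right accounting; without the $\|Du\|^{2(q-p)}$-factoring step, the raw iteration has mismatched exponents and does not close directly. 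Also note the paper needs one further round of reverse-H\"older (interpolating between $\frac{ps}{s+1}$ and $\frac{pns}{n(s+1)-2s}$, equations \eqref{stima3zero}--\eqref{gamma0}) to bring the starting integral $\int(1+|Du|^2)^{pm/2}$ down to the natural energy $\int(1+f(x,Du))$, which your sketch does not mention but is required to obtain \eqref{stimafin} in the stated form.
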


\begin{proof}
Since $u$ is a local minimizer of the functional $F$, then $u$ satisfies the
Euler's system 
\begin{equation*}
\int_{\Omega}\sum_{i,\alpha}f_{\xi_i^\alpha}(x,Du)\varphi_{x_i}^{\alpha}(x)%
\,dx=0\qquad \forall \varphi\in C_{0}^{\infty}(\Omega;\mathbb{R}^N),
\end{equation*}
and, using the second variation, for every $s=1,\ldots,n$ it holds 
\begin{equation}
\int_{\Omega}\left\{
\sum_{i,j,\alpha,\beta}f_{\xi_{i}^{\alpha}\xi_{j}^{\beta}}(x,Du)%
\varphi_{x_i}^{\alpha}u_{x_sx_j}^{\beta}+\sum_{i,\alpha}
f_{\xi_i^{\alpha}x_s}(x,Du)\varphi_{x_i}^{\alpha}\right \}\,dx=0\qquad
\forall \varphi\in C_{0}^{\infty}(\Omega;\mathbb{R}^N).  \label{3.3}
\end{equation}
Fix $s=1,\ldots,n$, a cut off function $\eta\in C_0^{\infty}(\Omega)$ and
define for any $\gamma\ge 0$ the function 
\begin{equation*}
\varphi^{\alpha}:=\eta^4u_{x_s}^{\alpha}(1+|Du|^2)^{\frac{\gamma}{2}}\quad
\alpha=1,\ldots,N.
\end{equation*}
One can easily check that 
\begin{align*}
\varphi^{\alpha}_{x_i}=& 4\eta^3\eta_{x_i}u_{x_s}^{\alpha}(1+|Du|^2)^{\frac{%
\gamma}{2}}+ \eta^ 4u_{x_sx_i}^{\alpha}(1+|Du|^2)^{\frac{\gamma}{2}}
+\gamma\eta^4u_{x_s}^{\alpha}(1+|Du|^2)^{\frac{\gamma-2}{2}}|Du|\big(|Du|%
\big)_{x_i}.
\end{align*}
Thanks to our assumptions on the minimizer $u$, through a standard density
argument, we can use $\varphi$ as test function in the equation \eqref{3.3},
thus getting 
\begin{align}
0=&\int_{\Omega}4\eta^3 (1+|Du|^2)^{\frac{\gamma}{2}}\sum_{i,j,s,\alpha,%
\beta}f_{\xi_{i}^{\alpha}\xi_{j}^{\beta}}(x,Du)\eta_{x_i}
u_{x_s}^{\alpha}u_{x_sx_j}^{\beta}\,dx  \notag \\
& +\int_{\Omega}\eta^4 (1+|Du|^2)^{\frac{\gamma}{2}}\sum_{i,j,s,\alpha,%
\beta}f_{\xi_{i}^{\alpha}\xi_{j}^{\beta}}(x,Du)
u_{x_sx_i}^{\alpha}u_{x_sx_j}^{\beta}\,dx  \notag \\
& +\gamma\int_{\Omega}\eta^4 (1+|Du|^2)^{\frac{\gamma-2}{2}%
}|Du|\sum_{i,j,s,\alpha,\beta}f_{\xi_{i}^{\alpha}\xi_{j}^{\beta}}(x,Du)
u_{x_s}^{\alpha}u_{x_sx_j}^{\beta} (|Du|)_{x_i} \,dx  \notag \\
& +\int_{\Omega}4\eta^3(1+|Du|^2)^{\frac{\gamma}{2}}\sum_{i,s,\alpha}
f_{\xi_{i}^{\alpha}x_s}(x,Du)\eta_{x_i}u_{x_s}^{\alpha}\,dx  \notag \\
& +\int_{\Omega}\eta^4 (1+|Du|^2)^{\frac{\gamma}{2}}\sum_{i,s,\alpha}f_{%
\xi_{i}^{\alpha}x_s}(x,Du) u_{x_sx_i}^{\alpha}\,dx  \notag \\
& +\gamma\int_{\Omega}\eta^4 (1+|Du|^2)^{\frac{\gamma-2}{2}%
}|Du|\sum_{i,s,\alpha}f_{\xi_{i}^{\alpha}x_s}(x,Du) u_{x_s}^{\alpha}\big(|Du|%
\big)_{x_i}\,dx  \notag \\
=:& I_{1}+I_2+I_3+I_4+I_5+I_6.  \label{3.5}
\end{align}

\textsc{Estimate of $I_1$}

\noindent By the use of Cauchy-Schwartz and Young's inequalities and by
virtue of the second inequality in \eqref{growth2(A2)--(A4)bis}, we can
estimate the integral $I_1$ as follows

\begin{eqnarray}  \label{I1}
|I_1| &\le & 4\int_{\Omega}\!\! (1+|Du|^2)^{\frac{\gamma}{2}}\Bigg\{%
\!\!\eta^2\sum_{i,j,s,\alpha,\beta}f_{\xi_{i}^{\alpha}\xi_{j}^{\beta}}(x,Du)%
\eta_{x_i} u_{x_s}^{\alpha} \eta_{x_j} u_{x_s}^{\beta}\Bigg\}^{\frac{1}{2}%
}\!\! \Bigg\{\eta^4\sum_{i,j,s,\alpha,\beta}f_{\xi_{i}^{\alpha}\xi_{j}^{%
\beta}}(x,Du) u_{x_sx_i}^{\alpha}u_{x_sx_j}^{\beta}\Bigg\}^{\frac{1}{2}} 
\notag \\
&\le & C(\varepsilon) \int_{\Omega}\eta^2|D\eta|^2 b(x)(1+|Du|^2)^{\frac{%
q+\gamma}{2}}\,dx  \notag \\
&&\qquad+ \varepsilon \int_{\Omega}\eta^4(1+|Du|^2)^{\frac{\gamma}{2}}
\sum_{i,j,s,\alpha,\beta}f_{\xi_{i}^{\alpha}\xi_{j}^{\beta}}(x,Du)
u_{x_sx_i}^{\alpha}u_{x_sx_j}^{\beta} \,dx,
\end{eqnarray}
where $\varepsilon >0$ will be chosen later.

\medbreak

\textsc{Estimate of $I_3$}

\noindent Since 
\begin{equation*}
f_{\xi_{i}^{\alpha}\xi_{j}^{\beta}}(x,\xi)=\left(\frac{g_{tt}(x,|\xi|)}{%
|\xi|^2} -\frac{g_{t}(x,|\xi|)}{|\xi|^3} \right)\xi_{i}^\alpha \xi_j^\beta+%
\frac{g_{t}(x,|\xi|)}{|\xi|}\delta_{\xi_i^\alpha \xi_j^\beta}
\end{equation*}
and 
\begin{equation}  \label{D(|Du|-1)}
(|Du|)_{x_i}= \frac{1}{|Du|}\sum_{\alpha,s}u_{x_ix_s}^{\alpha}u_{x_s}^{%
\alpha}
\end{equation}
then 
\begin{align}
& \sum_{i,j,s,\alpha,\beta}f_{\xi_{i}^{\alpha}\xi_{j}^{\beta}}(x,Du)
u_{x_s}^{\alpha}u_{x_sx_j}^{\beta} (|Du|)_{x_i}  \notag \\
= & \left(\frac{g_{tt}(x,|Du|)}{|Du|^2} -\frac{g_{t}(x,|Du|)}{|Du|^3}
\right)\sum_{i,j,s,\alpha,\beta} u_{x_s}^{\alpha}u_{x_sx_j}^{\beta}
u_{x_i}^{\alpha}u_{x_j}^{\beta} (|Du|)_{x_i}  \notag \\
& +\frac{g_{t}(x,|Du|)}{|Du|} \sum_{i,s,\alpha}
u_{x_s}^{\alpha}u_{x_sx_i}^{\alpha} (|Du|)_{x_i}  \notag \\
= & \left(\frac{g_{tt}(x,|Du|)}{|Du|} -\frac{g_{t}(x,|Du|)}{|Du|^2}
\right)\sum_{\alpha}\left(\sum_{i} u_{x_i}^{\alpha}(|Du|)_{x_i}\right)^2 
\notag \\
&+g_{t}(x,|Du|)|D(|Du|)|^2.  \label{3.7}
\end{align}
Thus, 
\begin{align*}
I_3=\gamma\int_{\Omega}\eta^4(1+|Du|^2)^{\frac{\gamma-2}{2}}|Du|&\Big\{ \Big(%
\frac{g_{tt}(x,|Du|)}{|Du|} -\frac{g_{t}(x,|Du|)}{|Du|^2} \Big)%
\sum_{\alpha}\left(\sum_{i} u_{x_i}^{\alpha}(|Du|)_{x_i}\right)^2 \\
& + g_{t}(x,|Du|)|D(|Du|)|^2\Big\}\,dx.
\end{align*}
Using the Cauchy-Schwartz inequality, i.e. 
\begin{equation*}
\sum_{\alpha}\left(\sum_{i} u_{x_i}^{\alpha}(|Du|)_{x_i}\right)^2\le
|Du|^2|D(|Du|)|^2
\end{equation*}
and observing that 
\begin{equation*}
g_{t}(x,|Du|)\ge 0,
\end{equation*}
we conclude 
\begin{equation}  \label{I3}
I_3\ge \gamma\int_{\Omega}\eta^4 (1+|Du|^2)^{\frac{\gamma-2}{2}}|Du|\frac{%
g_{tt}(x,|Du|)}{|Du|}\sum_{\alpha}\left(\sum_{i}
u_{x_i}^{\alpha}(|Du|)_{x_i}\right)^2\,dx\ge 0.
\end{equation}

\medbreak

\textsc{Estimate of $I_4$}

\noindent By using the last inequality in \eqref{growth2(A2)--(A4)bis} we
obtain 
\begin{eqnarray}
|I_4|&\le& 4\int_{\Omega}\eta^3 k(x)(1+|Du|^2)^{\frac{q-1+\gamma}{2}%
}\sum_{i,s,\alpha}|\eta_{x_i}u_{x_s}^{\alpha}|\,dx  \notag \\
&\le& 4\int_{\Omega}\eta^3|D\eta| k(x)(1+|Du|^2)^{\frac{q+\gamma}{2}}\,dx.
\label{I4}
\end{eqnarray}

\medbreak

\textsc{Estimate of $I_5$}

\noindent Using the last inequality in \eqref{growth2(A2)--(A4)bis} and
Young's inequality we have that 
\begin{eqnarray}  \label{I5}
|I_5| &\le& \int_{\Omega}\eta^4 k(x)(1+|Du|^2)^{\frac{q-1+\gamma}{2}%
}|D^2u|\,dx  \notag \\
&\le& \sigma \int_{\Omega}\eta^4a(x)(1+|Du|^2)^{\frac{p-2+\gamma}{2}%
}|D^2u|^2\,dx  \notag \\
&&+ C_\sigma\int_{\Omega}\eta^4\frac{k^2(x)}{a(x)}(1+|Du|^2)^{\frac{%
2q-p+\gamma}{2}}\,dx,
\end{eqnarray}
where $\sigma \in (0,1)$ will be chosen later and $a$ is the function
appearing in \eqref{growth2(A2)--(A4)bis}.

\medbreak

\textsc{Estimate of $I_6$}

\noindent Using the last inequality in \eqref{growth2(A2)--(A4)bis} and %
\eqref{D(|Du|-1)}, 
we get 
\begin{eqnarray}  \label{I6}
|I_6| &\le& \gamma\int_{\Omega} \eta^4k(x)(1+|Du|^2)^{\frac{q-1+\gamma}{2}}
|D(|Du|)|\,dx  \notag \\
& \le & \gamma\int_{\Omega}\eta^4(1+|Du|^2)^{\frac{q-1+\gamma}{2}%
}k(x)|D^2u|\,dx  \notag \\
&\le &\sigma\int_{\Omega}\eta^4a(x)(1+|Du|^2)^{\frac{p-2+\gamma}{2}%
}|D^2u|^2\,dx   \notag \\
&&+ C_\sigma \gamma^2\int_{\Omega}\eta^4\frac{k^2(x)}{a(x)}(1+|Du|^2)^{\frac{%
2q-p+\gamma}{2}}\,dx,
\end{eqnarray}
where we used Young's inequality again. \newline
Since the equality \eqref{3.5} can be written as follows 
\begin{equation*}
I_2+I_3=-I_1-I_4-I_5-I_6 \, ,
\end{equation*}
by virtue of \eqref{I3}, we get 
\begin{equation*}
I_2\le |I_1|+|I_4|+|I_5|+|I_6|\,
\end{equation*}
and therefore, recalling the estimates \eqref{I1}, \eqref{I4}, \eqref{I5}
and \eqref{I6}, we obtain 
\begin{eqnarray}  \label{stima2}
&&\int_{\Omega} \eta^4 (1+|Du|^2)^{\frac{\gamma}{2}}\sum_{i,j,s,\alpha,%
\beta}f_{\xi_{i}^{\alpha}\xi_{j}^{\beta}}(x,Du)
u_{x_sx_i}^{\alpha}u_{x_sx_j}^{\beta}\,dx  \notag \\
&\le& \varepsilon\int_{\Omega} \eta^4 (1+|Du|^2)^{\frac{\gamma}{2}%
}\sum_{i,j,s,\alpha,\beta}f_{\xi_{i}^{\alpha}\xi_{j}^{\beta}}(x,Du)
u_{x_sx_i}^{\alpha}u_{x_sx_j}^{\beta}\,dx  \notag \\
&&+4 \int_{\Omega}\eta^3|D\eta| k(x)(1+|Du|^2)^{\frac{q+\gamma}{2}}\,dx 
\notag \\
&&+2\sigma\int_{\Omega}\eta^4a(x)(1+|Du|^2)^{\frac{p-2+\gamma}{2}%
}|D^2u|^2\,dx  \notag \\
&&+C_\sigma(1+\gamma^2)\int_{\Omega}\eta^4\frac{k^2(x)}{a(x)}(1+|Du|^2)^{%
\frac{2q-p+\gamma}{2}}\,dx  \notag \\
&&+C_\varepsilon \int_{\Omega}\eta^2|D\eta|^2b(x)(1+|Du|^2)^{\frac{q+\gamma}{%
2}}\,dx.
\end{eqnarray}
Choosing $\varepsilon=\frac{1}{2}$, we can reabsorb the first integral in
the right hand side by the left hand side thus getting 
\begin{eqnarray}  \label{stima30}
&&\int_{\Omega} \eta^4 (1+|Du|^2)^{\frac{\gamma}{2}}\sum_{i,j,s,\alpha,%
\beta}f_{\xi_{i}^{\alpha}\xi_{j}^{\beta}}(x,Du)
u_{x_sx_i}^{\alpha}u_{x_sx_j}^{\beta}\,dx  \notag \\
&\le& 4\sigma\int_{\Omega}\eta^4a(x)(1+|Du|^2)^{\frac{p-2+\gamma}{2}%
}|D^2u|^2\,dx  \notag \\
&&+C\int_{\Omega}\eta^3|D\eta| k(x)(1+|Du|^2)^{\frac{q+\gamma}{2}%
}\,dx+C_\sigma(1+\gamma^2)\int_{\Omega}\eta^4\frac{k^2(x)}{a(x)}(1+|Du|^2)^{%
\frac{2q-p+\gamma}{2}}\,dx  \notag \\
&&+C \int_{\Omega}\eta^2|D\eta|^2b(x)(1+|Du|^2)^{\frac{q+\gamma}{2}}\,dx.
\end{eqnarray}
Now, using the ellipticity condition in \eqref{growth2(A2)--(A4)bis} to
estimate the left hand side of \eqref{stima30}, we get 
\begin{eqnarray*}
&&c_2 \int_{\Omega} \eta^4a(x)(1+|Du|^2)^{\frac{p-2+\gamma}{2}}|D^2u|^2\,dx
\\
&\le& 4\sigma\int_{\Omega}\eta^4a(x)(1+|Du|^2)^{\frac{p-2+\gamma}{2}%
}|D^2u|^2\,dx \\
&&+C\int_{\Omega}\eta^3|D\eta| k(x)(1+|Du|^2)^{\frac{q+\gamma}{2}%
}\,dx+C_\sigma(1+\gamma^2)\int_{\Omega}\eta^4\frac{k^2(x)}{a(x)}(1+|Du|^2)^{%
\frac{2q-p+\gamma}{2}}\,dx \\
&&+C \int_{\Omega}\eta^2|D\eta|^2b(x)(1+|Du|^2)^{\frac{q+\gamma}{2}}\,dx.
\end{eqnarray*}
We claim that $k \in L^{\frac{2s}{s-1}}_{\mathrm{loc}}(\Omega)$. Since by
assumption \eqref{kba2}, $k \in L^{r}_{\mathrm{loc}}(\Omega)$, we need to
prove that $\frac{2s}{s-1} \le r$ that is equivalent to $\frac{2}{r}+\frac{1%
}{s}\le 1$. This holds true, because by $\eqref{gap}$ and $q\ge p$ we get 
\begin{equation*}
\frac{s}{s+1}\left(1+\frac{1}{n}-\frac{1}{r}\right)>1\Leftrightarrow \frac{n%
}{r}+\frac{n}{s}<1
\end{equation*}
and we conclude, because $n\ge 2$. Therefore, since by the assumption $u\in
W^{1,\infty}_{\mathrm{loc}}(\Omega)\cap W^{2,\frac{2s}{s+1}}_{\mathrm{loc}%
}(\Omega)$, we can use Proposition \ref{weighted}, that implies that the
first integral in the right hand side of previous estimate is finite. By
choosing $\sigma=\frac{c_2}{8}$, we can reabsorb the first integral in the
right hand side by the left hand side thus getting 
\begin{eqnarray}  \label{stima3}
&&\int_{\Omega} \eta^4a(x)(1+|Du|^2)^{\frac{p-2+\gamma}{2}}|D^2u|^2\,dx 
\notag \\
&\le& C\int_{\Omega}\eta^3|D\eta| k(x)(1+|Du|^2)^{\frac{q+\gamma}{2}%
}\,dx+C(1+\gamma^2)\int_{\Omega}\eta^4\frac{k^2(x)}{a(x)}(1+|Du|^2)^{\frac{%
2q-p+\gamma}{2}}\,dx  \notag \\
&&+C \int_{\Omega}\eta^2|D\eta|^2b(x)(1+|Du|^2)^{\frac{q+\gamma}{2}}\,dx 
\notag \\
&\le& C\int_{\Omega}\left(\eta^2|D\eta|^2+|D\eta|^4\right) a(x)(1+|Du|^2)^{%
\frac{p+\gamma}{2}}\,dx  \notag \\
&&\quad +C(\gamma+1)^2\int_{\Omega}\eta^4\frac{k^2(x)+b^2(x)}{a(x)}%
(1+|Du|^2)^{\frac{2q-p+\gamma}{2}}\,dx,
\end{eqnarray}
where we used Young's inequality again. Now, we note that 
\begin{equation*}
\eta^4 a(x)\left|D\big((1+|Du|^2)^{\frac{p+\gamma}{4}}\big)\right|^2\le
c(p+\gamma)^2a(x)\eta^4(1+|Du|^2)^{\frac{p-2+\gamma}{2}}|D^2u|^2
\end{equation*}
and so fixing $\frac{R_0}{2}\le \rho<t^{\prime}<t<R<R_0$ with $R_0$ such
that $B_{R_0}\Subset\Omega$, and choosing $\eta\in C_0^\infty(B_t)$ a cut
off function between $B_{t^{\prime}}$ and $B_t$, by the assumption $%
a^{-1}\in L^{s}_{\mathrm{loc}}(\Omega)$ we can use Sobolev type inequality
of Lemma \ref{Sob} with $w=\eta^2(1+|Du|^2)^{\frac{p+\gamma}{4}}$, $%
\lambda=a $ and $p=2$, thus obtaining 
\begin{eqnarray*}
&&\left(\int_{B_t}\left(\eta^{2}(1+|Du|^2)^{\frac{p+\gamma}{4}%
}\right)^{\left(\frac{2s}{s+1}\right)^*}\,dx\right)^{\frac{2}{\left(\frac{2s%
}{s+1}\right)^*}}\le \frac{c(n)}{(t-t^{\prime})^2}\int_{B_t}a(x)(1+|Du|^2)^{%
\frac{p+\gamma}{2}}\,dx \\
&&\qquad\qquad+c(n)(p+\gamma)^2\int_{B_t} \eta^4 a\, (1+|Du|^2)^{\frac{%
p-2+\gamma}{2}}|D^2u|^2\,dx,
\end{eqnarray*}
with a constant $c(n)$ depending only on $n$.

Using \eqref{stima3} to estimate the last integral in previous inequality,
we obtain 
\begin{eqnarray*}
&&\left(\int_{B_t}\eta^{\frac{4ns}{n(s+1)-2s}}(1+|Du|^2)^{\frac{(p+\gamma)ns%
}{2(n(s+1)-2s)}}\,dx\right)^{\frac{n(s+1)-2s}{ns}} \\
&\le& c\left(\frac{(p+\gamma)^2}{(t-t^{\prime})^2}+\frac{(p+\gamma)^2}{%
(t-t^{\prime})^4}\right)\int_{B_t} a(x)(1+|Du|^2)^{\frac{p+\gamma}{2}}\,dx \\
&&\qquad+ c(p+\gamma)^4\int_{B_t}\frac{k^2(x)+b^2(x)}{a(x)}(1+|Du|^2)^{\frac{%
2q-p+\gamma}{2}}\,dx \\
&\le& c\left(\frac{(p+\gamma)^2}{(t-t^{\prime})^2}+\frac{(p+\gamma)^2}{%
(t-t^{\prime})^4}\right)\int_{B_t} a(x)(1+|Du|^2)^{\frac{p+\gamma}{2}}\,dx \\
&&\quad+ c(p+\gamma)^4\left(\int_{B_t}\frac{1}{a^{s}}\,dx\right)^{\frac{1}{s}%
}\left(\int_{B_t}(k^r+b^r)\,dx\right)^{\frac{2}{r}} \\
&&\qquad \times \left(\int_{B_t}(1+|Du|^2)^{\frac{(2q-p+\gamma)rs}{2(rs-2s-r)%
}}dx\right)^{\frac{rs-2s-r}{rs}},
\end{eqnarray*}
where we used assumptions \eqref{kba2} and H\"older's inequality with
exponents $s$, $\frac{r}{2}$ and $\frac{rs}{rs-2s-r}$.

Using the properties of $\eta $ we obtain 
\begin{eqnarray*}
&&\left( \int_{B_{t^{\prime }}}(1+|Du|^{2})^{\frac{(p+\gamma )ns}{%
2(n(s+1)-2s)}}\,dx\right) ^{\frac{n(s+1)-2s}{ns}} \\
&\leq &c(p+\gamma )^{4}\Vert a^{-1}\Vert _{L^{s}(B_{R_{0}})}\Vert k+b\Vert
_{L^{r}(B_{R_{0}})}^{2}\left( \int_{B_{t}}(1+|Du|^{2})^{\frac{(2q-p+\gamma
)rs}{2(rs-2s-r)}}dx\right) ^{\frac{rs-2s-r}{rs}} \\
&&\qquad +c\left( \frac{(p+\gamma )^{2}}{(t-t^{\prime })^{2}}+\frac{%
(p+\gamma )^{2}}{(t-t^{\prime })^{4}}\right) \int_{B_{t}}a(x)(1+|Du|^{2})^{%
\frac{p+\gamma }{2}}\,dx \\
&\leq &c(p+\gamma )^{4}\Vert a^{-1}\Vert _{L^{s}(B_{R_{0}})}\Vert k+b\Vert
_{L^{r}(B_{R_{0}})}^{2}\left( \int_{B_{t}}(1+|Du|^{2})^{\frac{(2q-p+\gamma
)rs}{2(rs-2s-r)}}dx\right) ^{\frac{rs-2s-r}{rs}} \\
&&\qquad +c\left( \frac{(p+\gamma )^{2}}{(t-t^{\prime })^{2}}+\frac{%
(p+\gamma )^{2}}{(t-t^{\prime })^{4}}\right) \Vert a\Vert _{L^{\frac{rs}{2s+r%
}}(B_{R_{0}})}\left( \int_{B_{t}}(1+|Du|^{2})^{\frac{(p+\gamma )rs}{%
2(rs-2s-r)}}\,dx\right) ^{\frac{rs-2s-r}{rs}},
\end{eqnarray*}%
where we used the assumption $a\in L_{\mathrm{loc}}^{\frac{rs}{2s+r}}(\Omega
)$. Setting 
\begin{equation}
\mathcal{K}_{R_{0}}=1+\Vert a^{-1}\Vert _{L^{s}(B_{R_{0}})}\Vert k+b\Vert
_{L^{r}(B_{R_{0}})}^{2}+\Vert a\Vert _{L^{\frac{rs}{2s+r}}}(B_{R_{0}})
\label{kappaR0}
\end{equation}%
and assuming without loss of generality that $t-t^{\prime }<1$, we can write
the previous estimate as follows 
\begin{align*}
& \left( \int_{B_{t^{\prime }}}(1+|Du|^{2})^{\frac{(p+\gamma )ns}{%
2(n(s+1)-2s)}}\,dx\right) ^{\frac{n(s+1)-2s}{ns}} \\
& \leq c(p+\gamma )^{4}\mathcal{K}_{R_{0}}\left( \int_{B_{t}}(1+|Du|^{2})^{%
\frac{(2q-p+\gamma )rs}{2(rs-2s-r)}}dx\right) ^{\frac{rs-2s-r}{rs}} \\
& \qquad +c(p+\gamma )^{2}\frac{\mathcal{K}_{R_{0}}}{(t-t^{\prime })^{4}}%
\left( \int_{B_{t}}(1+|Du|^{2})^{\frac{(p+\gamma )rs}{2(rs-2s-r)}%
}\,dx\right) ^{\frac{rs-2s-r}{rs}},
\end{align*}%
and, using the a-priori assumption $u\in W_{\mathrm{loc}}^{1,\infty }(\Omega
)$, we get 
\begin{align}
& \left( \int_{B_{t^{\prime }}}(1+|Du|^{2})^{\frac{(p+\gamma )ns}{%
2(n(s+1)-2s)}}\,dx\right) ^{\frac{n(s+1)-2s}{ns}}  \notag \\
& \leq c(p+\gamma )^{4}\mathcal{K}_{R_{0}}\left( \Vert Du\Vert _{L^{\infty
}(B_{R})}^{2(q-p)}+\frac{1}{(t-t^{\prime })^{4}}\right) \left(
\int_{B_{t}}(1+|Du|^{2})^{\frac{(p+\gamma )rs}{2(rs-2s-r)}}dx\right)^{\frac{%
rs-2s-r}{rs}}.  \label{stima3nu}
\end{align}

Setting now 
\begin{equation*}
m=\frac{rs}{rs-2s-r}
\end{equation*}
and noting that 
\begin{equation*}
\frac{ns}{n(s+1)-2s}=\frac{1}{2}\left(\frac{2s}{s+1}\right)^*=:\frac{2_s^*}{2%
}
\end{equation*}
we can write \eqref{stima3nu} as follows 
\begin{eqnarray}  \label{ultima}
&&\left(\int_{B_{t^{\prime}}}\left((1+|Du|^2)^{\frac{(p+\gamma)m}{2}%
}\right)^{\frac{2_s^*}{2m}}\,dx\right)^{\frac{2m}{2_s^*}}  \notag \\
&\le& c(p+\gamma)^{4m}\mathcal{K}^m_{R_0}\frac{\|Du\|^{2(q-p)m}_{L^%
\infty(B_{R})}}{(t-t^{\prime})^{4m}}\int_{B_t}(1+|Du|^2)^{\frac{(p+\gamma)m}{%
2}}dx,
\end{eqnarray}
where, without loss of generality, we supposed $\|Du\|^{2(q-p)m}_{L^%
\infty(B_{R})}\ge 1$. Define now the decreasing sequence of radii by setting 
\begin{equation*}
\rho_i= \rho+\frac{ R- \rho}{2^i}
\end{equation*}
and the increasing sequence of exponents 
\begin{equation*}
p_0=pm\qquad\qquad {p_{i}}={p_{i-1}}\left(\frac{2_s^*}{2m}\right)=p_0\left(%
\frac{2_s^*}{2m}\right)^{i}
\end{equation*}
As we will prove (see \eqref{gamma0} below) the right hand side of %
\eqref{ultima} is finite for $\gamma=0$ . Then for every $%
\rho<\rho_{i+1}<\rho_{i}< R$, we may iterate it on the concentric balls $%
B_{\rho_i}$ with exponents $p_i$, thus obtaining%
\begin{eqnarray}  \label{stimapreitepassoi}
&&\left ( \int_{B_{\rho_{i+1}}} (1+ |Du|^2)^{\frac{p_{i+1}}{2}}\, dx\right
)^{\frac{1}{p_{i+1}}}  \notag \\
&\le& \displaystyle{\prod_{j=0}^{i}}\left(C^m \mathcal{K}^m_{R_0}{\frac{
p_j^{4m}\|Du\|^{2m(q-p)}_{L^\infty(B_R)}}{(\rho_{j} - \rho_{j+1})^{4m}}}%
\right)^{\frac{1}{p_j}}\left(\int_{B_{R}}(1+|Du|^2)^{\frac{p_0}{2}} \,
dx\right)^{\frac{1}{p_0}}  \notag \\
&=& \displaystyle{\prod_{j=0}^{i}}\left(C^m \mathcal{K}^m_{R_0}{\frac{4^{jm}
p_j^{4m}\|Du\|^{2(q-p)m}_{L^\infty(B_{R})}}{(R - \rho)^{4m}}}\right)^{\frac{1%
}{p_j}}\left(\int_{B_{R}}(1+|Du|^2)^{\frac{p_0}{2}} \, dx\right)^{\frac{1}{%
p_0}}  \notag \\
&=& \displaystyle{\prod_{j=0}^{i}}\left(4^{jm} p_j^{4m} \right)^{\frac{1}{p_j%
}}\displaystyle{\prod_{j=0}^{i}}\left({\frac{C^m \mathcal{K}^m_{R_0}
\|Du\|^{2(q-p)m}_{L^\infty(B_{R})}}{(R - \rho)^{4m}}}\right)^{\frac{1}{p_j}%
}\left(\int_{B_{R}}(1+|Du|^2)^{\frac{p_0}{2}} \, dx\right)^{\frac{1}{p_0}}.
\end{eqnarray}
Since 
\begin{equation*}
\displaystyle{\prod_{j=0}^{i}}\left(4^{jm} p_j^{4m}\right)^{\frac{1}{p_j}%
}=\exp\left(\sum_{j=0}^i \frac{1}{p_j}\log(4^{jm} p_j^{4m})\right) \le
\exp\left(\sum_{j=0}^{+\infty} \frac{1}{p_j}\log(4^{jm} p_j^{4m})\right) \le
c(n,r)
\end{equation*}
and 
\begin{eqnarray*}
&&\displaystyle{\prod_{j=0}^{i}}\left(\frac{C^m \mathcal{K}%
^m_{R_0}\|Du\|^{2(q-p)m}_{L^\infty(B_t)}}{{(R - \rho)^{4m}}}\right)^{\frac{1%
}{p_j}}=\left(\frac{C^m \mathcal{K}^m_{R_0}\|Du\|^{2(q-p)m}_{L^\infty(B_R)}}{%
{(R - \rho)^{4m}}}\right)^{\sum_{j=0}^{i}\frac{1}{p_j}} \\
&\le& \left(\frac{C^m \mathcal{K}^m_{R_0} \|Du\|^{2(q-p)m}_{L^\infty(B_R)}}{{%
(R - \rho)^{4m}}}\right)^{\sum_{j=0}^{+\infty}\frac{1}{p_j}}=\left(\frac{C 
\mathcal{K}_{R_0}\|Du\|^{2(q-p)}_{L^\infty(B_R)}}{{(R - \rho)^{4}}}\right)^{%
\frac{2^*_s}{p(2^*_s-2m)}}
\end{eqnarray*}
we can let $i\to \infty$ in \eqref{stimapreitepassoi} thus getting 
\begin{eqnarray*}
\|Du\|_{L^{\infty}(B_{\rho})} \le C(n,r,p) \left(\frac{\mathcal{K}_{R_0}}{{%
(R - \rho)^{4}}}\right)^{\frac{2^*_s}{p(2^*_s-2m)}}\|Du\|^{\frac{2(q-p)2^*_s%
}{p(2^*_s-2m)}}_{L^\infty(B_R)}\left(\int_{B_R}(1+|Du|^2)^{\frac{pm}{2}} \,
dx\right)^{\frac{1}{pm}},
\end{eqnarray*}
where we used that $p_0=pm$. Since assumption \eqref{gap} implies that 
\begin{equation}  \label{e:2*s}
\frac{2(q-p)2^*_s}{p(2^*_s-2m)}<1,
\end{equation}
we can use Young's inequality with exponents 
\begin{equation*}
\frac{p(2^*_s-2m)}{2(q-p)2^*_s}>1\qquad \text{and}\qquad \frac{p(2^*_s-2m)}{%
p(2^*_s-2m)-2(q-p)2^*_s}
\end{equation*}
to deduce that 
\begin{eqnarray}  \label{e:inftym}
\|Du\|_{L^{\infty}(B_{\rho})} \le \frac{1}{2}\|Du\|_{L^\infty(B_R)}+
C(n,r,p,s) \left(\frac{\mathcal{K}_{R_0}}{{(R - \rho)^{4}}}%
\right)^{\vartheta}\left(\int_{B_R}(1+|Du|^2)^{\frac{pm}{2}} \,
dx\right)^{\varsigma},
\end{eqnarray}
with $\vartheta=\vartheta(p,q,n,s)$ and $\varsigma=\varsigma(p,q,n,s)$.

We now estimate the last integral. By definition of $m$ and by the
assumption on $s$, i.e., $s>\frac{nr}{r-n}$, we get 
\begin{equation*}
m<\frac{ns}{n(s+1)-2s}.
\end{equation*}
Thus, by H\"older's inequality, 
\begin{equation}  \label{e:stimam}
\int_{B_{R}}(1+|Du|^2)^{\frac{pm}{2}}\,dx\le c(R_0,n,r,s)
\left(\int_{B_{R}}(1+|Du|^2)^{\frac{pns}{2(n(s+1)-2s)}}\,dx\right)^{\frac{%
r(n(s+1)-2s)}{n(rs-2s-r)}}.
\end{equation}
This last integral can be estimated by using \eqref{stima3nu} with $\gamma=0$%
. Indeed, let us re-define $t^{\prime},t$ and $\eta$ as follows: consider $%
R\le t^{\prime}<t\le 2R-\rho \le R_0$ and $\eta$ a cut off function, $%
\eta\equiv 1$ on $B_{t^{\prime}}$ and $\mathrm{supp\,}\eta\subset B_{t}$. By %
\eqref{stima3nu} with $\gamma=0$, 
\begin{eqnarray}
&&\left(\int_{B_{t^{\prime}}}(1+|Du|^2)^{\frac{pns}{2(n(s+1)-2s)}%
}\,dx\right)^{\frac{n(s+1)-2s}{ns}}  \notag \\
&\le& c\left(\frac{p^2}{(t-t^{\prime})^2}+\frac{p^2}{(t-t^{\prime})^4}%
\right)\int_{B_{R_0}} a(x)(1+|Du|^2)^{\frac{p}{2}}\,dx  \notag \\
&&\qquad+cp^4\mathcal{K}_{B_{R_0}}\left(\int_{B_{R_0}}(k^r+b^r)\,dx\right)^{%
\frac{2}{r}}  \notag \\
&&\qquad\qquad\qquad\qquad\qquad\times \left(\int_{B_t}(1+|Du|^2)^{\frac{%
(2q-p)rs}{2(rs-2s-r)}}dx\right)^{\frac{rs-2s-r}{rs}}.  \label{stima3zero}
\end{eqnarray}
If we denote 
\begin{equation*}
\tau:= \frac{(2q-p)rs}{rs-2s-r}, \quad \tau_1:=\frac{nps}{n(s+1)-2s},\quad
\tau_2:=\frac{ps}{s+1},
\end{equation*}
by \eqref{gap} and $s>\frac{rn}{r-n}$, we get 
\begin{equation*}
\frac{\tau}{\tau_1}< 1<\frac{\tau}{\tau_2}.
\end{equation*}

Therefore there exists $\theta\in (0,1)$ such that 
\begin{equation*}
1=\theta\frac{\tau}{\tau_1}+(1-\theta)\frac{\tau}{\tau_2}.
\end{equation*}
The precise value of $\theta$ is 
\begin{equation}  \label{e:theta}
\theta=\frac{ns(qr-pr+p)+qrn}{rs(2q-p)}.
\end{equation}
By H\"older's inequality with exponents $\frac{\tau_1}{\theta\tau}$ and $%
\frac{\tau_2}{(1-\theta)\tau}$ we get 
\begin{align*}
& \left(\int_{B_{t}}(1+|Du|^2)^{\frac{(2q-p)rs}{2(rs-2s-r)}}dx\right)^{\frac{%
rs-2s-r}{rs}} = \left(\int_{B_{t}}(1+|Du|^2)^{\theta\frac{\tau}{2}+(1-\theta)%
\frac{\tau}{2}}dx\right)^{\frac{2q-p}{\tau}} \\
&\le \left(\int_{B_{t}}(1+|Du|^2)^{\frac{\tau_1}{2}}dx\right)^{\frac{%
(2q-p)\theta}{\tau_1}} \left(\int_{B_{t}}(1+|Du|^2)^{\frac{\tau_2}{2}%
}dx\right)^{\frac{(2q-p)(1-\theta)}{\tau_2}}.
\end{align*}

Hence, we can use the inequality above to estimate the last integral of %
\eqref{stima3zero} to deduce that 
\begin{eqnarray}  \label{stima3zero1}
&&\left(\int_{B_{t^{\prime}}} (1+|Du|^2)^{\frac{pns}{2(n(s+1)-2s)}%
}\,dx\right)^{\frac{n(s+1)-2s}{ns}}  \notag \\
&\le& c\left(\frac{p^2}{(t-t^{\prime})^2}+\frac{p^2}{(t-t^{\prime})^4}%
\right)\int_{B_{R_0}} a(x)(1+|Du|^2)^{\frac{p}{2}}\,dx  \notag \\
&& +C \mathcal{K}_{B_{R_0}}\left(\int_{B_{t}}(1+|Du|^2)^{\frac{ps}{2(s+1)}%
}\,dx\right)^{\frac{(1-\theta)(2q-p)(s+1)}{ps}}  \notag \\
&&\qquad\times \left(\int_{B_{t}}(1+|Du|^2)^{\frac{pns}{2(n(s+1)-2s)}%
}\,dx\right)^{\frac{\theta(ns+n-2s)(2q-p)}{nps}}.
\end{eqnarray}
Note that, again by \eqref{gap} and \eqref{e:theta}, we have 
\begin{equation*}
\frac{\theta(2q-p)}{p}<1.
\end{equation*}
%
%
We can use Young's inequality in the last term of \eqref{stima3zero1} with
exponents $\frac{p}{p-\theta(2q-p)}$ and $\frac{p}{\theta(2q-p)}$ to obtain
that for every $\sigma<1$ 
\begin{align*}
& \left(\int_{B_{t^{\prime}}}(1+|Du|^2)^{\frac{pns}{2(n(s+1)-2s)}%
}\,dx\right)^{\frac{n(s+1)-2s}{ns}} \\
\le &C\left(\frac{p^2}{(t-t^{\prime})^2}+\frac{p^2}{(t-t^{\prime})^4}%
\right)\int_{B_{R_0}} a(x)(1+|Du|^2)^{\frac{p}{2}}\,dx \\
& +C_\sigma\mathcal{K}_{B_{R_0}}^{\frac{p}{p-\theta(2q-p)}}
\left(\int_{B_{t}}(1+|Du|^2)^{\frac{ps}{2(s+1)}}\,dx\right)^{ \frac{%
(1-\theta)(2q-p)}{p-\theta (2q-p)}\frac{s+1}{s} } \\
&\qquad+\sigma \left(\int_{B_{t}}(1+|Du|^2)^{\frac{pns}{2(n(s+1)-2s)}%
}\,dx\right)^{\frac{n(s+1)-2s}{ns}}.
\end{align*}
By applying Lemma \ref{iter}, and noting that $2R-\rho-R=R-\rho$, we
conclude that 
\begin{eqnarray}  \label{highint0}
&& \left(\int_{B_{R}}(1+|Du|^2)^{\frac{pns}{2(n(s+1)-2s)}}\,dx\right)^{\frac{%
n(s+1)-2s}{ns}}  \notag \\
&&\le C\left(\frac{p^2}{(R-\rho)^2}+\frac{p^2}{(R-\rho)^4}%
\right)\int_{B_{R_0}} a(x)(1+|Du|^2)^{\frac{p}{2}}\,dx  \notag \\
&& +C\mathcal{K}_{B_{R_0}}^{\frac{p}{p-\theta(2q-p)}}
\left(\int_{B_{R_0}}(1+|Du|^2)^{\frac{ps}{2(s+1)}}\,dx\right)^{\frac{%
(1-\theta)(2q-p)}{p-\theta (2q-p)}\frac{s+1}{s}}.
\end{eqnarray}

Collecting \eqref{e:stimam} and \eqref{highint0} we obtain 
\begin{eqnarray}  \label{gamma0}
&&\int_{B_{R}}(1+|Du|^2)^{\frac{pm}{2}}\,dx  \notag \\
\le && C\left(\frac{p^2}{(R-\rho)^2}+\frac{p^2}{(R-\rho)^4}%
\right)\int_{B_{R_0}} a(x)(1+|Du|^2)^{\frac{p}{2}}\,dx  \notag \\
&& +C\mathcal{K}_{B_{R_0}}^{\frac{p}{p-\theta(2q-p)}}
\left(\int_{B_{R_0}}(1+|Du|^2)^{\frac{ps}{2(s+1)}}\,dx\right)^{\frac{%
(1-\theta)(2q-p)}{p-\theta (2q-p)}\frac{s+1}{s}}.
\end{eqnarray}
Notice that the right hand side is finite, because $u$ is a local minimizer
and \eqref{(A2)--(A4)-I} and \eqref{intu} hold. This inequality, together
with \eqref{e:inftym}, implies 
\begin{align*}
\Vert Du\Vert _{L^{\infty }(B_{\rho })}\leq& \frac{1}{2}\Vert Du\Vert
_{L^{\infty }(B_{R})}+C\left( \frac{\mathcal{K}_{B_{R_{0}}}}{(R-\rho )^{8}}%
\right) ^{\theta }\left( \int_{B_{R_{0}}}a(x)(1+|Du|^{2})^{\frac{p}{2}%
}\,dx\right) ^{\tilde{\theta}} \\
&+C\left( \frac{\mathcal{K}_{B_{R_{0}}}}{(R-\rho )^{8}}\right) ^{\theta
}\left( \int_{B_{R_{0}}}(1+|Du|^{2})^{\frac{ps}{2(s+1)}}\,dx\right) ^{\tilde{%
\varsigma}}
\end{align*}%
with the constant $C$ depending on the data. Applying Lemma \ref{iter} we
conclude the proof of estimate \eqref{stimafin}. Now, we write the estimate %
\eqref{stima3} for $\gamma =0$ and for a cut off function $\eta \in
C_{0}^{\infty }(B_{\frac{R}{2}})$, $\eta =1$ on $B_{\rho }$ for some $\rho <%
\frac{R}{2}$. This yields 
\begin{eqnarray*}
&&\int_{B_{\rho }}a(x)(1+|Du|^{2})^{\frac{p-2}{2}}|D^{2}u|^{2}\,dx \\
&\leq &C(R)\int_{B_{\frac{R}{2}}}a(x)(1+|Du|^{2})^{\frac{p}{2}%
}\,dx+C\int_{B_{\frac{R}{2}}}\frac{k^{2}(x)+b^{2}(x)}{a(x)}(1+|Du|^{2})^{%
\frac{2q-p}{2}}\,dx \\
&\leq &C(R)\int_{B_{\frac{R}{2}}}f(x,Du)\,dx+C\Vert 1+|Du|\Vert _{L^{\infty
}(B_{\frac{R}{2}})}^{2q-p}\int_{B_{\frac{R}{2}}}\frac{k^{2}(x)+b^{2}(x)}{a(x)%
}\,dx \\
&\leq &C(R)\int_{B_{\frac{R}{2}}}f(x,Du)\,dx \\
&&\quad +C(R)\Vert 1+|Du|\Vert _{L^{\infty }(B_{\frac{R}{2}})}^{2q-p}\left(
\int_{B_{\frac{R}{2}}}(k^{r}(x)+b^{r}(x))\,dx\right) ^{\frac{2}{r}}\left(
\int_{B_{\frac{R}{2}}}\frac{1}{a^{s}(x)}\,dx\right) ^{\frac{1}{s}},
\end{eqnarray*}%
where we used H\"{o}lder's inequality, since $\frac{1}{s}+\frac{2}{r}<1$ by
assumptions. Using \eqref{stimafin} to estimate the $L^{\infty }$ norm of $%
|Du|$ and recalling the definition of $\mathcal{K}_{R_{0}}$ at %
\eqref{kappaR0}, we get 
\begin{equation*}
\int_{B_{\rho }}a(1+|Du|^{2})^{\frac{p-2}{2}}|D^{2}u|^{2}\,dx\leq c\left(
\int_{B_{R}}1+f(x,Du)\,dx\right) ^{\tilde{\varrho}},
\end{equation*}%
i.e. \eqref{stimafin2}, with $c$ depending on $p,r,s,n,\rho ,R,\mathcal{\ K}%
_{R_{0}}$.
\end{proof}

\bigskip

\bigskip

\section{An auxiliary functional: higher differentiability estimate}

\label{s:highdiff}

Consider the functional 
\begin{equation*}
H(v)=\int_{\Omega }h(x,Dv)\,dx
\end{equation*}%
where $\Omega \subset \mathbb{R}^{n}$, $n\geq 2$, is a Sobolev map and $%
h:\Omega \times \mathbb{R}^{N\times n}\rightarrow \lbrack 0,+\infty )$ is a
Carath\'{e}odory function, convex and of class $C^{2}$ with respect to the
second variable. We assume that there exists $\tilde{h}:\Omega \times
\lbrack 0,+\infty )\rightarrow \lbrack 0,+\infty )$, increasing in the last
variable such that 
\begin{equation}
h(x,\xi )=\tilde{h}(x,|\xi |).  \label{(A1g)}
\end{equation}%
Moreover assume that there exist $p,q$, $1<p<q$, and constant $\ell ,\nu
,L_{1},L_{2}$ such that 
\begin{equation}
\ell (1+|\xi |^{2})^{\frac{p}{2}}\leq h(x,\xi )\leq L_{1}\,(1+|\xi |^{2})^{%
\frac{q}{2}}  \label{growthb}
\end{equation}%
%
%
for a.e. $x\in \Omega $ and for every $\xi \in \mathbb{R}^{N\times n}$. We
assume that $h$ is of class $C^{2}$ with respect to the $\xi -$variable, and
that the following conditions hold 
\begin{equation}
\nu \,(1+|\xi |^{2})^{\frac{p-2}{2}}|\lambda |^{2}\leq \langle h_{\xi \xi
}(x,\xi )\lambda ,\lambda \rangle \leq L_{2}\,(1+|\xi |^{2})^{\frac{q-2}{2}%
}|\lambda |^{2}  \label{(A2g)}
\end{equation}%
for a.e. $x\in \Omega $ and for every $\xi ,\lambda \in \mathbb{R}^{N\times
n}$. Moreover, we assume that there exists a non-negative function $k\in L_{%
\mathrm{loc}}^{r}(\Omega )$ such that 
\begin{equation}
|D_{\xi x}h(x,\xi )|\leq k(x)(1+|\xi |^{2})^{\frac{q-1}{2}}  \label{(A4g)}
\end{equation}%
for a.e. $x\in \Omega $ and for every $\xi \in \mathbb{R}^{N\times n}$.

The following is a higher differentiability result for minimizers of $H$,
that, by the result in \cite{EMM1}, are locally Lipschitz continuos.

\begin{theorem}
\label{highdiff} Let $v\in W^{1,\infty}_\mathrm{loc}(\Omega)$ be a local
minimizer of the functional $H$. Assume \eqref{(A1g)}--\eqref{(A4g)} for a
couple of exponents $p,q$ such that 
\begin{equation}  \label{gaph}
\frac{q}{p}<1+\frac{1}{n}-\frac{1}{r}.
\end{equation}
Then $u\in W^{2,2}_{\mathrm{loc}}(\Omega)$ and the following estimate holds 
\begin{eqnarray*}
&&\int_{B_\rho} |D V_p(Du)|^2\,dx \\
&\le& \frac{c}{(R-\rho)^2} \|1+|Du|\|^{2q-p}_{\infty}\int_{B_{2R}}|D
u|^2\,dx+c\|1+|Du|\|^{2q-p}_{\infty}\int_{B_R} k^2(x)\,dx \\
&&+ c\|1+|Du|\|^{q-1}_{\infty}\left(\int_{B_R} k^{\frac{p}{p-1}%
}(x)\,dx\right)^{\frac{p-1}{p}}\left(\int_{B_{2R}}|D u|^p\,dx\right)^{\frac{1%
}{p}}
\end{eqnarray*}
for every ball $B_\rho\subset B_R\subset B_{2R}\Subset\Omega.$
\end{theorem}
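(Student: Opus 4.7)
The plan is to run a finite-difference (Nirenberg) argument on the Euler--Lagrange system of $H$, using the a priori bound $v\in W^{1,\infty}_{\loc}(\Omega)$ to dominate the $q$-growth of $h_{\xi\xi}$ by $L^\infty$ factors. Fix concentric balls $B_\rho\subset B_R\subset B_{2R}\Subset\Omega$, take a standard cut-off $\eta\in C_0^\infty(B_R)$ with $\eta\equiv 1$ on $B_\rho$ and $|D\eta|\le c/(R-\rho)$, and for $s\in\{1,\dots,n\}$ and small $|h|$ test the first variation with $\varphi=\tau_{s,-h}(\eta^2\tau_{s,h}v)$. The Lipschitz a priori bound guarantees admissibility, and by property (ii) of the finite-difference operator the identity becomes $\int_\Omega \tau_{s,h}[h_\xi(x,Dv)]\cdot D(\eta^2\tau_{s,h}v)\,dx=0$.

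Next I would decompose $\tau_{s,h}[h_\xi(x,Dv)]$ through the intermediate point $(x+he_s,Dv(x))$: the $\xi$-increment rewrites, via Taylor's formula, as $\bigl[\int_0^1 h_{\xi\xi}(x+he_s,Dv+t\tau_{s,h}Dv)\,dt\bigr]\tau_{s,h}Dv$, while the $x$-increment is controlled via \eqref{(A4g)} by $|h|\,\bar k(x)(1+|Dv|^2)^{(q-1)/2}$ with $\bar k(x):=\int_0^1 k(x+the_s)\,dt$. Expanding $D(\eta^2\tau_{s,h}v)=2\eta D\eta\otimes\tau_{s,h}v+\eta^2\tau_{s,h}Dv$ yields four summands, of which the principal one is bounded from below, using the ellipticity in \eqref{(A2g)} applied to the averaged Hessian, by
\[
\nu\int\eta^2\bigl(1+|Dv|^2+|Dv(\cdot+he_s)|^2\bigr)^{(p-2)/2}|\tau_{s,h}Dv|^2\,dx.
\]

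The three error terms are then treated as follows. (a) The $\xi$-cross term $\int\eta|D\eta|\,h_{\xi\xi}\,\tau_{s,h}v\cdot\tau_{s,h}Dv\,dx$ is handled by Young's inequality, splitting the exponent $(q-2)/2$ as $(p-2)/4+(2q-p-2)/4$; this gives an absorbable piece plus a remainder $\|1+|Dv|\|_\infty^{2q-p-2}\int|D\eta|^2|\tau_{s,h}v|^2\,dx$, which by property (iii) is dominated by $c|h|^2(R-\rho)^{-2}\|1+|Dv|\|_\infty^{2q-p-2}\int_{B_{2R}}|Dv|^2\,dx$. (b) The $x$-increment paired with $\eta^2\tau_{s,h}Dv$ is treated by a further Young's inequality, producing an absorbable piece and the term $c|h|^2\|1+|Dv|\|_\infty^{2q-p}\int_{B_R}k^2\,dx$. (c) The $x$-increment paired with $2\eta D\eta\otimes\tau_{s,h}v$ is bounded by H\"older (not Young) with conjugate exponents $p/(p-1)$ and $p$, and extraction of the $L^\infty$ factor $\|1+|Dv|\|_\infty^{q-1}$ yields the third summand of the claim. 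Choosing the Young parameters small enough to absorb the $\sigma$-pieces on the left, dividing by $|h|^2$ and letting $h\to 0$, property (iv) together with Lemma \ref{Vi} (which identifies $(1+|Dv|^2)^{(p-2)/2}|D^2v|^2$ with $|DV_p(Dv)|^2$ up to multiplicative constants) deliver the stated estimate.

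The main technical obstacle is the $p\neq q$ mismatch in \eqref{(A2g)}: the ellipticity lower bound carries the exponent $p-2$ while the $h_{\xi\xi}$ upper bound carries $q-2$, so each cross term carries an excess power $(2q-p-2)/2$ that can be tamed only by extracting the $L^\infty$ norm of $Dv$. This is precisely why the a priori Lipschitz assumption on $v$ is structurally essential here, and it also explains why the gap condition \eqref{gaph} plays no direct role in the present estimate: it enters only in the subsequent application of this theorem within the approximation scheme for Theorem \ref{t:main}.
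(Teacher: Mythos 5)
Your proposal is correct and follows essentially the same route as the paper: test the Euler--Lagrange system with $\varphi=\tau_{s,-h}(\eta^2\tau_{s,h}v)$, split $\tau_{s,h}[h_\xi(x,Dv)]$ through the intermediate point $(x+he_s,Dv(x))$, use ellipticity on the principal Taylor term, treat the two $x$-increment errors and the $\xi$-cross term exactly as you describe (Young on the first two, H\"older with exponents $p/(p-1)$ and $p$ on the last), then divide by $|h|^2$, invoke Lemma \ref{Vi} for the difference quotient of $V_p$, and pass to the limit via property (iv) of the finite-difference operator. Your exponent $2q-p-2$ in step (a) is in fact slightly sharper than the paper's $2q-p$ (they just bound up to match the statement), and your closing remark that \eqref{gaph} is not used internally but only to secure $v\in W^{1,\infty}_{\loc}$ in applications is also accurate.
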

\begin{proof}
Since $v$ is a local minimizer of the functional $H$, then $v$ satisfies the
Euler's system 
\begin{equation*}
\int_{\Omega }\sum_{i,\alpha }h_{\xi _{i}^{\alpha }}(x,Du)\varphi
_{x_{i}}^{\alpha }(x)\,dx=0\qquad \forall \varphi \in C_{0}^{\infty }(\Omega
;\mathbb{R}^{N}).
\end{equation*}%
Let $B_{2R}\Subset \Omega $ and let $\eta \in C_{0}^{\infty }(B_{R})$ be a
cut off function between $B_{\rho }$ and $B_{R}$ for some $\rho <R$. Fixed $%
1\leq s\leq n$, and denoted $e_{s}$ is the unit vector in the $x_{s}$
direction, consider the finite differential operator $\tau _{s,h}$, see %
\eqref{e:rappincr}, from now on simply denoted $\tau _{h}$. Choosing $%
\varphi =\tau _{-h}(\eta ^{2}\tau _{h}u)$ as test function in the Euler's
system, we get, by properties (i) and (ii) of $\tau _{h}$, 
\begin{equation*}
\int_{\Omega }\sum_{i,\alpha }\tau _{h}(h_{\xi _{i}^{\alpha
}}(x,Du))D_{x_{i}}(\eta ^{2}\tau _{h}u^{\alpha })\,dx=0
\end{equation*}%
and so 
\begin{equation*}
\int_{\Omega }\sum_{i,\alpha }\tau _{h}(h_{\xi _{i}^{\alpha }}(x,Du))(2\eta
\eta _{x_{i}}\tau _{h}u^{\alpha }+\eta ^{2}\tau _{h}u_{x_{i}}^{\alpha
})\,dx=0.
\end{equation*}%
Exploiting the definition of $\tau _{h}$, we get 
\begin{equation*}
\int_{\Omega }\big(h_{\xi _{i}^{\alpha }}(x+he_{s},Du(x+he_{s}))-h_{\xi
_{i}^{\alpha }}(x,Du(x))\big)(2\eta \eta _{x_{i}}\tau _{h}u^{\alpha }+\eta
^{2}\tau _{h}u_{x_{i}}^{\alpha })\,dx=0
\end{equation*}%
i.e. 
\begin{align*}
&\int_{\Omega }\eta ^{2}[h_{\xi _{i}^{\alpha
}}(x+he_{s},Du(x+he_{s}))-h_{\xi _{i}^{\alpha }}(x+he_{s},Du(x))]\tau
_{h}u_{x_{i}}^{\alpha }\,dx \\
=&\int_{\Omega }\eta ^{2}[h_{\xi _{i}^{\alpha }}(x,Du(x))-h_{\xi
_{i}^{\alpha }}(x+he_{s},Du(x))]\tau _{h}u_{x_{i}}^{\alpha }\,dx \\
& -2\int_{\Omega }\eta \lbrack h_{\xi _{i}^{\alpha
}}(x+he_{s},Du(x+he_{s}))-h_{\xi _{i}^{\alpha }}(x+he_{s},Du(x))]\eta
_{x_{i}}\tau _{h}u^{\alpha } \\
&+2\int_{\Omega }\eta \lbrack h_{\xi _{i}^{\alpha }}(x,Du(x))-h_{\xi
_{i}^{\alpha }}(x+he_{s},Du(x))]\eta _{x_{i}}\tau _{h}u^{\alpha }.
\end{align*}%
We can write previous equality as follows 
\begin{align*}
I_{0}=:&\int_{\Omega }\eta ^{2}\int_{0}^{1}\sum_{i,j,\alpha ,\beta }h_{\xi
_{i}^{\alpha }\xi _{j}^{\beta }}(x+he_{s},Du(x)+\sigma \tau
_{h}Du(x))d\sigma \tau _{h}u_{x_{i}}^{\alpha }\tau _{h}u_{x_{j}}^{\beta }\,dx
\\
=& h\int_{\Omega }\eta ^{2}\int_{0}^{1}\sum_{i,\alpha }h_{x_{s}\xi
_{i}^{\alpha }}(x+\sigma he_{s},Du(x))d\sigma \tau _{h}u_{x_{i}}^{\alpha
}\,dx \\
& -2\int_{\Omega }\eta \int_{0}^{1}\sum_{i,j,\alpha ,\beta }h_{\xi
_{i}^{\alpha }\xi _{j}^{\beta }}(x+he_{s},Du(x)+\sigma \tau
_{h}Du(x))d\sigma \eta _{x_{i}}\tau _{h}u^{\alpha }\tau _{h}u_{x_{j}}^{\beta
} \\
& +2h\int_{\Omega }\eta \int_{0}^{1}\sum_{i,\alpha }h_{x_{s}\xi _{i}^{\alpha
}}(x+\sigma he_{s},Du(x))d\sigma \eta _{x_{i}}\tau _{h}u^{\alpha } \\
=: &I_{1}+I_{2}+I_{3},
\end{align*}%
that implies 
\begin{equation}
I_{0}\leq |I_{1}|+|I_{2}|+|I_{3}|  \label{zero}
\end{equation}%
The ellipticity assumption \eqref{(A2g)} yields 
\begin{equation}
I_{0}\geq \nu \int_{\Omega }\eta ^{2}(1+|Du(x)|^{2}+|Du(x+he_{s})|^{2})^{%
\frac{p-2}{2}}|\tau _{h}Du|^{2}\,dx  \label{hd0}
\end{equation}%
By assumption \eqref{(A4g)} we get 
\begin{eqnarray}
|I_1|&\le &|h|\int_{\Omega}\eta^2 \int_0^1 k(x+\sigma he_s)d\sigma
(1+|Du(x)|^2)^{\frac{q-1}{2}}|\tau_h Du|\,dx  \notag \\
&\le& |h|\|1+|Du|\|^{\frac{2q-p}{2}}_{\infty}\int_{\Omega}\eta^2 \int_0^1
k(x+\sigma he_s)d\sigma (1+|Du(x)|^{2})^{\frac{p-2}{4}}|\tau_h Du|\,dx 
\notag \\
&\le&\frac{\nu}{4}\int_{\Omega}\eta^2 (1+|Du(x)|^{2})^{\frac{p-2}{2}}|\tau_h
Du|^2\,dx  \notag \\
&&+ c_\nu|h|^2\|1+|Du|\|^{2q-p}_{\infty}\int_{\Omega}\eta^2 \left(\int_0^1
k(x+\sigma he_s)d\sigma\right)^2\,dx,  \label{hd1}
\end{eqnarray}
where in the last line we used Young's inequality. The right inequality in %
\eqref{(A2g)} yields 
\begin{eqnarray}
|I_2|&\le &c(L_2)\int_{\Omega}\eta|D\eta| (1+|Du(x)|^2+|Du(x+he_s)|^2)^{%
\frac{q-2}{2}}|\tau_h Du||\tau_h u|\,dx  \notag \\
&\le& c(L_2)\|1+|Du|\|^{\frac{2q-p}{2}}_{\infty}\int_{\Omega}\eta|D\eta|
(1+|Du(x)|^{2}+|Du(x+he_s)|^2)^{\frac{p-2}{4}}|\tau_h Du||\tau_h u|\,dx 
\notag \\
&\le&\frac{\nu}{4}\int_{\Omega}\eta^2 (1+|Du(x)|^{2}+|Du(x+he_s)|^2)^{\frac{%
p-2}{2}}|\tau_h Du|^2\,dx  \notag \\
&&+c_{\nu,L_2}\|1+|Du|\|^{2q-p}_{\infty}\int_{\Omega}|D\eta|^2|\tau_h
u|^2\,dx.  \label{hd2}
\end{eqnarray}%
Finally, using again assumption \eqref{(A4g)} and H\"{o}lder's inequality,
we obtain 
\begin{eqnarray}
|I_3|&\le &2|h|\int_{\Omega}\eta|D\eta| \int_0^1 k(x+\sigma he_s)d\sigma
(1+|Du(x)|^2)^{\frac{q-1}{2}}|\tau_h u|\,dx  \notag \\
&\le& |h|\|1+|Du|\|^{q-1}_{\infty}\int_{\Omega}\eta |D\eta| \int_0^1
k(x+\sigma he_s)d\sigma |\tau_h u|\,dx  \notag \\
&\le& |h| \|1+|Du|\|^{q-1}_{\infty}\left(\int_{\Omega}\eta|D\eta|
\left(\int_0^1 k(x+\sigma he_s)d\sigma\right)^{\frac{p}{p-1}}\,dx\right)^{%
\frac{p-1}{p}}  \notag \\
&&\qquad\times\left(\int_{\Omega}\eta|D\eta||\tau_h u|^p\,dx\right)^{\frac{1%
}{p}}.  \label{hd3}
\end{eqnarray}
Inserting \eqref{hd0}, \eqref{hd1}, \eqref{hd2} and \eqref{hd3} in %
\eqref{zero}, we get 
\begin{align*}
&\nu \int_{\Omega }\eta ^{2}(1+|Du(x)|^{2}+|Du(x+he_{s})|^{2})^{\frac{p-2}{2}%
}|\tau _{h}Du|^{2}\,dx \\
\leq &\frac{\nu }{2}\int_{\Omega }\eta
^{2}(1+|Du(x)|^{2}+|Du(x+he_{s})|^{2})^{\frac{p-2}{2}}|\tau _{h}Du|^{2}\,dx
\\
&+ c_{\nu ,L_{2}}\Vert 1+|Du|\Vert _{\infty }^{2q-p}\int_{\Omega }|D\eta
|^{2}|\tau _{h}u|^{2}\,dx \\
&+ c_{\nu }|h|^{2}\Vert 1+|Du|\Vert _{\infty }^{2q-p}\int_{\Omega }\eta
^{2}\left( \int_{0}^{1}k(x+\sigma he_{s})d\sigma \right) ^{2}\,dx \\
&+ |h|\Vert 1+|Du|\Vert _{\infty }^{q-1}\left( \int_{\Omega }\eta |D\eta
|\left( \int_{0}^{1}k(x+\sigma he_{s})d\sigma \right)^{\frac{p}{p-1}%
}\,dx\right)^{\frac{p-1}{p}} \\
&\qquad \times \left( \int_{\Omega }\eta |D\eta ||\tau _{h}u|^{p}\,dx\right)
^{\frac{1}{p}}.
\end{align*}%
Reabsorbing the first integral in the right hand side by the left hand side
and recalling the properties of the cut off function $\eta $, we obtain 
\begin{eqnarray*}
&&\int_{B_{\rho }}(1+|Du(x)|^{2}+|Du(x+he_{s})|^{2})^{\frac{p-2}{2}}|\tau
_{h}Du|^{2}\,dx \\
&\leq &\frac{c}{(R-\rho )^{2}}\Vert 1+|Du|\Vert _{\infty
}^{2q-p}\int_{B_{R}}|\tau _{h}u|^{2}\,dx \\
&&+c|h|^{2}\Vert 1+|Du|\Vert _{\infty }^{2q-p}\int_{B_{R}}\left(
\int_{0}^{1}k(x+\sigma he_{s})d\sigma \right) ^{2}\,dx \\
&&+c|h|\Vert 1+|Du|\Vert _{\infty }^{q-1}\left( \int_{B_{R}}\left(
\int_{0}^{1}k(x+\sigma he_{s})d\sigma \right) ^{\frac{p}{p-1}}\,dx\right) ^{%
\frac{p-1}{p}} \\
&&\qquad \times \left( \int_{B_{R}}|\tau _{h}u|^{p}\,dx\right) ^{\frac{1}{p}%
},
\end{eqnarray*}%
where $c=c(\nu ,L_{2},n,N)$. By property (iii) of the finite difference
operator we deduce that 
\begin{eqnarray}
&&\int_{B_\rho} (1+|Du(x)|^2+|Du(x+he_s)|^2)^{\frac{p-2}{2}}|\tau_h Du|^2\,dx
\notag \\
&\le& \frac{c}{(R-\rho)^2}|h|^2 \|1+|Du|\|^{2q-p}_{\infty}\int_{B_{2R}}|D
u|^2\,dx  \notag \\
&&+c|h|^2\|1+|Du|\|^{2q-p}_{\infty}\int_{B_R}\left(\int_0^1 k(x+\sigma
he_s)d\sigma\right)^2\,dx  \notag \\
&&+ c|h|^2\|1+|Du|\|^{q-1}_{\infty}\left(\int_{B_R} \left(\int_0^1
k(x+\sigma he_s)d\sigma\right)^{\frac{p}{p-1}}\,dx\right)^{\frac{p-1}{p}} 
\notag \\
&&\qquad\times \left(\int_{B_{2R}}|D u|^p\,dx\right)^{\frac{1}{p}}.
\label{hd4}
\end{eqnarray}
Dividing \eqref{hd4} by $|h|^{2}$ and using Lemma \ref{Vi} in the left hand
side, we get 
\begin{eqnarray*}
&&\int_{B_{\rho }}\frac{|\tau _{h}V_{p}(Du)|^{2}}{|h|^{2}}\,dx \\
&\leq &\frac{c}{(R-\rho )^{2}}\Vert 1+|Du|\Vert _{\infty
}^{2q-p}\int_{B_{2R}}|Du|^{2}\,dx \\
&&+c\Vert 1+|Du|\Vert _{\infty }^{2q-p}\int_{B_{R}}\left(
\int_{0}^{1}k(x+\sigma he_{s})d\sigma \right) ^{2}\,dx \\
&&+c\Vert 1+|Du|\Vert _{\infty }^{q-1}\left( \int_{B_{R}}\left(
\int_{0}^{1}k(x+\sigma he_{s})d\sigma \right) ^{\frac{p}{p-1}}\,dx\right) ^{%
\frac{p-1}{p}} \\
&&\qquad \times \left( \int_{B_{2R}}|Du|^{p}\,dx\right) ^{\frac{1}{p}}.
\end{eqnarray*}%
Letting $h$ go to $0$, by property (iv) of the finite difference operator,
we conclude 
\begin{eqnarray}
&&\int_{B_\rho} |D V_p(Du)|^2\,dx  \notag \\
&\le& \frac{c}{(R-\rho)^2} \|1+|Du|\|^{2q-p}_{\infty}\int_{B_{2R}}|D
u|^2\,dx+c\|1+|Du|\|^{2q-p}_{\infty}\int_{B_R} k^2(x)\,dx  \notag \\
&&+ c\|1+|Du|\|^{q-1}_{\infty}\left(\int_{B_R} k^{\frac{p}{p-1}%
}(x)\,dx\right)^{\frac{p-1}{p}}\left(\int_{B_{2R}}|D u|^p\,dx\right)^{\frac{1%
}{p}}.  \label{hd6}
\end{eqnarray}
Since $k\in L^{r}$, with $r>n\geq 2\geq \frac{p}{p-1}$ , estimate \eqref{hd6}
implies the conclusion.
\end{proof} 

\section{Proof of Theorem \ref{t:main}}

\label{s:finale} Using the previous results and an approximation procedure,
we can prove of our main result.

\begin{proof}[Proof of Theorem \protect\ref{t:main}]
For $f(x,\xi)$ satisfying the assumptions \eqref{(A2)--(A4)}--\eqref{gap},
let us introduce the sequence 
\begin{equation}  \label{fh}
f_h(x,\xi)=f(x,\xi)+\frac{1}{h}(1+|\xi|^2)^{\frac{ps}{2(s+1)}}.
\end{equation}
Note that $f_h(x,\xi)$ satisfies 
the following set of conditions 
\begin{equation}  \label{growthh}
\frac{1}{h}(1+|\xi|^2)^{\frac{ps}{2(s+1)}} \le f_h(x,\xi)\le
(1+L)\,(1+|\xi|^2)^{\frac{q}{2}},
\end{equation}
\begin{equation}  \label{(A2h)}
\frac{c_1}{h}\,(1+|\xi|^2)^{\frac{ps}{2(s+1)}-2}|\lambda|^2\le \langle
D_{\xi\xi} f_h(x,\xi)\lambda,\lambda\rangle,
\end{equation}
\begin{equation}  \label{(A3h)}
| D_{\xi\xi} f_h(x,\xi)| \le c_2 (1+L)\,(1+|\xi|^2)^{\frac{q-2}{2}},
\end{equation}
\begin{equation}  \label{(A4h)}
|D_{\xi x}f_h(x,\xi)|\le k(x)(1+|\xi|^2)^{\frac{q-1}{2}}
\end{equation}
for some constants $c_1,c_2>0$, for a.e. $x \in \Omega $ and for every $%
\xi\in \mathbb{R}^{N\times n}$.

Now, fix a ball $B_{R}\Subset \Omega$, and let $v_h\in W^{1,\frac{ps}{s+1}}(B_{R}, 
\mathbb{R} ^N)$ be the unique solution to the problem 
\begin{equation}
\min\left\{\int_{B_{R}} f_h(x,Dv)\,dx :\,\, v_h\in u +W^{1,\frac{ps}{s+1}%
}_0(B_{R}, \mathbb{R} ^N)\right\}.
\end{equation}
Since $f_h (x,\xi)$ satisfies \eqref{growthh}, \eqref{(A2h)}, \eqref{(A3h)} %
\eqref{(A4h)} with $k\in L^r$, $r>n$, and \eqref{gap} holds, then by the
result in \cite{EMM1} we have that $v_h \in W^{1,\infty}_{\mathrm{loc}}(B_R)$
and by Theorem \ref{highdiff}, used with $p$ replaced by $p\frac{s}{s+1}$,
we also have $v_h\in W^{2,2}_{\mathrm{loc}}(B_R)$.

Since $f_{h}(x,\xi )$ satisfies \eqref{growthh}, by the minimality of $v_{h}$
we get 
\begin{eqnarray*}
&&\int_{B_{R}}|Dv_{h}|^{\frac{ps}{s+1}}\,dx\leq
c_{s}\int_{B_{R}}a(x)|Dv_{h}|^{p}+c_{s}\int_{B_{R}}\frac{1}{a^{s}(x)}\,dx \\
&\leq &c_{s}\int_{B_{R}}f_{h}(x,Dv_{h})\,dx+c_{s}\int_{B_{R}}\frac{1}{%
a^{s}(x)}\,dx \\
&\leq &c_{s}\int_{B_{R}}f_{h}(x,Du)\,dx+c_{s}\int_{B_{R}}\frac{1}{a^{s}(x)}%
\,dx \\
&=&c_{s}\int_{B_{R}}f(x,Du)\,dx+\frac{c_{s}}{h}\int_{B_{R}}(1+|Du|)^{\frac{ps%
}{s+1}}\,dx+c_{s}\int_{B_{R}}\frac{1}{a^{s}(x)}\,dx \\
&\leq &c_{s}\int_{B_{R}}f(x,Du)\,dx+c_{s}\int_{B_{R}}(1+|Du|)^{\frac{ps}{s+1}%
}\,dx+c_{s}\int_{B_{R}}\frac{1}{a^{s}(x)}\,dx.
\end{eqnarray*}%
Therefore the sequence $v_{h}$ is bounded in $W^{1,\frac{ps}{s+1}}(B_{R})$,
so there exists $v\in u+W_{0}^{1,\frac{ps}{s+1}}(B_{R})$ such that, up to
subsequences, 
\begin{equation}
v_{h}\rightharpoonup v\qquad \text{weakly in}\,\,W^{\frac{ps}{s+1}}(B_{R}).
\label{wconv}
\end{equation}%
On the other hand, we can apply Theorem \ref{t:apriori} to $f_{h}(x,\xi )$
since the assumptions are satisfied, with $b$ replaced by $1+L$. Thus, we
are legitimate to apply estimates \eqref{stimafin} and \eqref{stimafin2} to
the solutions $v_{h}$ to obtain 
\begin{eqnarray}
&&\|Dv_h\|_{L^\infty(B_\rho)}\le C\mathcal{K}_R^{\tilde\vartheta}
\left(\int_{B_R}(1+f_h(x,Dv_h))\,dx\right)^{\tilde\varsigma}  \notag \\
&\le& C\mathcal{K}_R^{\tilde\vartheta}
\left(\int_{B_R}(1+f_h(x,Du))\,dx\right)^{\tilde\varsigma}  \notag \\
&=&C\mathcal{K}_R^{\tilde\vartheta} \left(\int_{B_R}(1+f(x,Du)+\frac{1}{h}%
(1+|Du|^2)^{\frac{ps}{2(s+1)}})\,dx\right)^{\tilde\varsigma}  \notag \\
&\le&C\mathcal{K}_R^{\tilde\vartheta}\left(\int_{B_R}(1+f(x,Du)+(1+|Du|^2)^{%
\frac{ps}{2(s+1)}})\,dx\right)^{\tilde\varsigma},  \label{due}
\end{eqnarray}
with $C,\tilde{\vartheta},\tilde{\varsigma}$ independent of $h$ and $0<\rho
<R$. Therefore, up to subsequences, 
\begin{equation}
v_{h}\rightharpoonup v\qquad \text{weakly* in}\,\,W^{1,\infty }(B_{\rho }).
\label{wconv2}
\end{equation}%
Our next aim is to show that $v=u$. The lower semicontinuity of $u\mapsto
\int_{B_{R}}f(x,Du)$ and the minimality of $v_{h}$ imply 
\begin{eqnarray*}
&&\int_{B_R}f(x,Dv)\,dx\le \liminf_h \int_{B_R}f(x,Dv_h)\,dx\le \liminf_h
\int_{B_R}f_h(x,Dv_h)\,dx \\
&\le& \liminf_h \int_{B_R}f_h(x,Du)\,dx \\
&=&\liminf_h \int_{B_R}(f(x,Du)+\frac{1}{h}(1+|Du|^2)^{\frac{ps}{2(s+1)}%
})\,dx \\
&=& \int_{B_R}f(x,Du)\,dx.
\end{eqnarray*}
The strict convexity of $f$ yields that $u=v$. Therefore passing to the
limit as $h\rightarrow \infty $ in \eqref{due} we get 
\begin{equation*}
\Vert Du\Vert _{L^{\infty }(B_{\rho })}\leq C\mathcal{K}_{R}^{\tilde{%
\vartheta}}\left( \int_{B_{R}}(1+f(x,Du)+(1+|Du|^{2})^{\frac{ps}{2(s+1)}%
})\,dx\right) ^{\tilde{\varsigma}},
\end{equation*}%
i.e. \eqref{fin}. Moreover, we are legitimate to apply estimate %
\eqref{stimafin2} to each $v_{h}$ thus getting 
\begin{eqnarray*}
\int_{B_{\rho }}a(x)(1+|Dv_{h}|^{2})^{\frac{p-2}{2}}|D^{2}v_{h}|^{2}\,dx
&\leq &c\left( \int_{B_{R}}(1+f_{h}(x,Dv_{h}))\,dx\right) ^{\tilde{\varrho}}
\\
&\leq &c\left( \int_{B_{R}}(1+f_{h}(x,Du))\,dx\right) ^{\tilde{\varrho}} \\
&=&c\left( \int_{B_{R}}\left( 1+f(x,Du)+\frac{1}{h}(1+|Du|^{2})^{\frac{ps}{%
2(s+1)}}\right) \,dx\right) ^{\tilde{\varrho}},
\end{eqnarray*}%
where we used the minimality of $v_{h}$ and the definition of $f_{h}(x,\xi )$%
. Since $v_{h}\rightarrow u$ a.e. up to a subsequence, we conclude that 
\begin{eqnarray*}
&&\int_{B_{\rho }}a(x)(1+|Du|^{2})^{\frac{p-2}{2}}|D^{2}u|^{2}\,dx\leq
\liminf_{h}\int_{B_{\rho }}a(x)(1+|Dv_{h}|^{2})^{\frac{p-2}{2}%
}|D^{2}v_{h}|^{2}\,dx \\
&&\quad \leq c\liminf_{h}\left( \int_{B_{R}}\left( 1+f(x,Du)+\frac{1}{h}%
(1+|Du|^{2})^{\frac{ps}{2(s+1)}}\right) \,dx\right) ^{\tilde{\varrho}} \\
&&\quad =\left( \int_{B_{R}}\left( 1+f(x,Du)\right) \,dx\right) ^{\tilde{%
\varrho}},
\end{eqnarray*}%
i.e. \eqref{hdfin}.
\end{proof}


\begin{thebibliography}{99}
\bibitem{bdgp} \textsc{A.K. Balci, L. Diening, R. Giova, A. Passarelli di
Napoli:} {\ Elliptic equations with degenerate weights}, (2020),
arXiv:2003.10380v1.

\bibitem{BCM} \textsc{P. Baroni, M. Colombo, G. Mingione:}{\ Regularity for
general functionals with double phase,}\emph{\ Calc. Var. Partial
Differential}, \textbf{57} (2018),  no. 2, Paper No. 62, 48 pp.

\bibitem{bella-sch} \textsc{P. Bella, M. Sch\"{a}ffner:} {\ Local
boundedness and Harnack inequality for solutions of linear non-uniformly
elliptic equations}, \emph{Comm. Pure App. Math.} (2019) to appear.

\bibitem{belloni-buttazzo} \textsc{M. Belloni, G. Buttazzo:}{\ A survey of
old and recent results about the gap phenomenon in the calculus of
variations,}\emph{\ R. Lucchetti, J. Revalski (Eds.), Recent developements
in well-posed variational problems, Mathematical Applications,}\text{\ }$%
\mathbf{331}${\ (1995), 1-27}.

\bibitem{BiaCupMas} \textsc{\ S. Biagi, G. Cupini, E. Mascolo:} {\
Regularity of quasi-minimizers for non-uniformly elliptic integrals,} \emph{%
J. Math Anal. Appl.}, \textbf{485} (2020), 123838, 20 pp.

\bibitem{CMP1} \textsc{M. Carozza, G. Moscariello, A. Passarelli di Napoli:}{%
\ Higher integrability for minimizers of anisotropic functionals}\emph{\
Discrete Contin. Dyn. Syst. Ser. B}, $\mathbf{11}$ (2009), 43-55.

\bibitem{Cianchi-Mazya2011} \textsc{A. Cianchi, V. Maz'ya:}{\ }Global
Lipschitz regularity for a class of quasilinear elliptic equations, \textit{%
Comm. Partial Differential Equations}, \textbf{36} (2011), 100-133.

\bibitem{colmin} \textsc{M. Colombo, G. Mingione:}{\ Regularity for double
phase variational problems,}\emph{\ Arch. Rat. Mech. Anal.}, \textbf{215}
(2015), 443-496.

\bibitem{colmin2} \textsc{M. Colombo, G. Mingione:}{\ Bounded minimisers of
double phase variational integrals}, \textit{Arch. Rat. Mech. Anal.}, 
\textbf{218} (2015), 219-273.

\bibitem{cruz} \textsc{D. Cruz-Uribe, P. Di Gironimo, C. Sbordone:}{\ On the
continuity of solutions to degenerate elliptic equations}, \emph{J.
Differential Equations} \textbf{250} (2011), 2671-2686.

\bibitem{CGGP} \textsc{G. Cupini, F. Giannetti, R. Giova, A. Passarelli di
Napoli:} {\ Regularity results for vectorial minimizers of a class of
degenerate convex integrals,} \textit{J. Differential Equations}, \textbf{265%
} (2018), 4375-4416.

\bibitem{cupguimas} \textsc{G. Cupini, M. Guidorzi, E. Mascolo:} {\
Regularity of minimizers of vectorial integrals with $p,q$}${-}$growth{,} 
\textit{Nonlinear Anal.}, \textbf{54} (2003), 591-616.

\bibitem{Cupini-Marcellini-Mascolo2012} \textsc{G. Cupini, P. Marcellini, E.
Mascolo:} Local boundedness of solutions to quasilinear elliptic systems, 
\textit{Manuscripta Math.}, \textbf{137}{\ (2012), 287-315.} %

\bibitem{Cupini-Marcellini-Mascolo 2014} \textsc{G. Cupini, P. Marcellini,
E. Mascolo:} {Existence and regularity for elliptic equations under $p,q$}${-%
}${growth,} \textit{Adv. Differential Equations},\textbf{\ 19}{\ (2014),
693-724}.

\bibitem{CupMarMas17} \textsc{G. Cupini, P. Marcellini, E. Mascolo:} {\
Regularity of minimizers under limit growth conditions}, \emph{Nonlinear
Anal.}, \textbf{153} (2017), 294-310.

\bibitem{CupMarMas18} \textsc{G. Cupini, P. Marcellini, E. Mascolo:} {\
Nonuniformly elliptic energy integrals with $p,q$-growth}, \emph{Nonlinear
Anal.}, \textbf{177} (2018), part A, 312-324.

\bibitem{deg} \textsc{E. De Giorgi:} {\ Un esempio di estremali discontinue
per un problema variazionale di tipo ellittico,}\emph{\ Boll. Un. Mat. Ital.,%
} \textbf{1}{\ (1968), 135-137}.

\bibitem{defi-ming} \textsc{C. De Filippis, G. Mingione:} {\ On the
regularity of non-autonomous functionals,} \emph{J. Geom. Anal.} \textbf{30}
(2020), 1584-1626.

\bibitem{defi-ok} \textsc{C. De Filippis, J. Oh:} {\ Regularity for
multi-phase variational problems} \emph{J. Differential Equations}, \textbf{%
267} (2019), 1631-1670.

\bibitem{DiMarco-Marcellini} \textsc{T. Di Marco, P. Marcellini:} A-priori
gradient bound for elliptic systems under either slow or fast growth
conditions, \textit{Calc. Var. Partial Differential Equations}, \textbf{59}
(2020), 26 pp.

\bibitem{Duzaar-Mingione2010} \textsc{F. Duzaar, G. Mingione:} Local
Lipschitz regularity for degenerate elliptic systems, \textit{Ann. Inst. H.
Poincar\'{e} Anal. Non Lin\'{e}aire}, \textbf{27} (2010), 1361-1396.

\bibitem{EMM1} \textsc{M. Eleuteri, P. Marcellini, E. Mascolo.}{\ Lipschitz
estimates for systems with ellipticity conditions at infinity.} \emph{Ann.
Mat. Pura Appl.}, \textbf{195} (2016), 1575-1603.

\bibitem{EMM2} \textsc{M. Eleuteri, P. Marcellini, E. Mascolo:} {\ Lipschitz
continuity for functionals with variable exponents} \emph{\ Rend. Lincei
Mat. Appl.,} \textbf{27} (2016), 61-87.

\bibitem{EMM3} \textsc{M. Eleuteri, P. Marcellini, E. Mascolo:} {\ }%
Regularity for scalar integrals without structure conditions, \textit{%
Advances in Calculus of Variations}, \textbf{13} (2020), 279-300.
https://doi.org/10.1515/acv-2017-0037

\bibitem{esp-leo-pet} \textsc{A. Esposito, F. Leonetti, P.V. Petricca:}{\
Absence of Lavrentiev gap for non-autonomous functionals with $(p,q)-$growth,%
} \emph{\ Adv. Nonlinear Anal.,} \textbf{8} (2019), 73-78.

\bibitem{espleomin} \textsc{L. Esposito, F. Leonetti, G. Mingione:} {\
Regularity results for minimizers of irregular integrals with $(p,q)$ growth}%
, \emph{Forum Mathematicum}, \textbf{14} (2002), 245-272.

\bibitem{espleominp} \textsc{L. Esposito, F. Leonetti, G. Mingione:} {\
Sharp regularity for functionals with $(p,q)$ growth}, \emph{J. Differential
Equations}, \textbf{204} (2004), 5-55.

%


\bibitem{fabes-kenig-serapioni} \textsc{E. Fabes, C. Kenig, R. Serapioni:} {%
\ The local regularity of solutions of degenerate elliptic equations}, \emph{%
Comm. Partial Diff. Eq.}, \textbf{7} (1982), 77-116.

\bibitem{GPdN} \textsc{\ R. Giova, A. Passarelli di Napoli:} {\ Regularity
results for a priori bounded minimizers of non-autonomous functionals with
discontinuous coefficients}, \textit{Adv. Calc. Var.}, \textbf{12} (2019),
85-110.

\bibitem{Giusti} \textsc{E. Giusti:} {\ Direct methods in the calculus of
variations}. World scientific publishing Co. (2003) 50.

\bibitem{Iwaniec} \textsc{T. Iwaniec, L. Migliaccio, G. Moscariello, A.
Passarelli di Napoli}: {\ A priori estimates for nonlinear elliptic complexes%
}, Adv. Differential Equations, \textbf{8} (2003), 513-546.

\bibitem{Iwaniec-Sbordone} \textsc{T. Iwaniec, C. Sbordone:} {\
Quasiharmonic fields,} \emph{Ann. Inst. H. Poincar\'{e} Anal. Non Lin\'{e}%
aire}, \textbf{18} (2001), 519-572.

\bibitem{mar85} \textsc{P. Marcellini:}{\ Approximation of quasiconvex
functions, and lower semicontinuity of multiple integrals,} \emph{%
Manuscripta Math.,} \textbf{51} (1985), no. 1-3, 1-28.

\bibitem{mar89} \textsc{P. Marcellini:}{\ Regularity of minimizers of
integrals in the calculus of variations with non standard growth conditions,}%
\emph{\ Arch. Rational Mech. Anal.}, \textbf{105}{\ (1989) 267-284}.

\bibitem{mar91} \textsc{P. Marcellini:}{\ Regularity and existence of
solutions of elliptic equations with $p,q$-growth conditions,} \emph{\ J.
Differential Equations}, \textbf{90} {(1991), 1-30}.

\bibitem{mar96} \textsc{P. Marcellini:}{\ Everywhere regularity for a class
of elliptic systems without growth conditions ,}\emph{\ Ann. Scuola Norm.
Sup. Pisa Cl. Sci.}, \textbf{23}{\ (1996), 1-25}.

\bibitem{MarcelliniDiscrContDinSystems2019} \textsc{P. Marcellini:}{\ }%
Regularity under general and $p,q-$growth conditions, \textit{Discrete Cont.
Dinamical Systems Series S}, \textbf{13} (2020), 2009-2031.

\bibitem{MarcelliniNonAnal2019} \textsc{P. Marcellini:}{\ }A variational
approach to parabolic equations under general and $p,q-$growth conditions, 
\textit{Nonlinear Anal.}, \textbf{194} (2020), 111456, 17 pp.

\bibitem{MarcelliniJMAA2020} \textsc{P. Marcellini:} Growth conditions and
regularity for weak solutions to nonlinear elliptic pdes, \textit{J. Math.
Anal. Appl.}, 2020, to appear. \ https://doi.org/10.1016/j.jmaa.2020.124408

\bibitem{mar-papi} \textsc{P. Marcellini, G. Papi:} {\ Nonlinear elliptic
systems with general growth,}\emph{\ J. Differential Equations},{\ }\textbf{%
221}{\ (2006), 412-443. }

\bibitem{Mingione} \textsc{G. Mingione:}{\ Regularity of minima: an
invitation to the dark side of the calculus of variations,}\emph{\ Appl.
Math.,}\textbf{\ 51}{\ (2006), 355-426}.

\bibitem{moo-sa} \textsc{C. Mooney, O. Savin :}{\ Some singular minimizers
in low dimensions in the calculus of variations,}\emph{\ Arch. Ration. Mech.
Anal.},\textbf{\ 22} {\ (2016), 1-22} %

\bibitem{moser} \textsc{J. Moser:}{\ A new proof of De Giorgi's theorem
concerning the regularity problem for elliptic differential equations,} 
\emph{Comm. Pure Appl. Math.,} \textbf{13} (1960), 457-468.

\bibitem{PdN14-1} \textsc{A. Passarelli di Napoli:} {\ Higher
differentiability of minimizers of variational integrals with Sobolev
coefficients}, \emph{Adv. Calc. Var.}, \textbf{7} (2014), 59-89.

\bibitem{PdN14-2} \textsc{A. Passarelli di Napoli:} {\ Higher
differentiability of solutions of elliptic systems with Sobolev
coefficients: the case $p=n=2$}, \emph{Pot. Anal.}, \textbf{41} (2014),
715-735.

\bibitem{PdN15} \textsc{A. Passarelli di Napoli:} {\ Regularity results for
non-autonomous variational integrals with discontinuous coefficients}, \emph{%
Atti Accad. Naz. Lincei, Rend. Lincei Mat. Appl.},{\ }\textbf{26} (2015),
(4), 475-496.

\bibitem{pingen} \textsc{M. Pingen:}{\ Regularity results for degenerate
elliptic systems,}\emph{\ }\textit{Ann. Inst. H. Poincar\'{e} Anal. Non Lin%
\'{e}aire}, \textbf{25} (2008), 369-380.

\bibitem{sverak} \textsc{V. $\check{\text{S}}$ver\'{a}k, X.Yan:} {\ A
singular minimizer of a smooth strongly convex functional in three
dimensions,}\emph{\ Calc. Var. Partial Differential Equations},{\ \textbf{10}%
\ (2000) 213-221}

\bibitem{Trud} \textsc{N.S. Trudinger}: {\ On the regularity of generalized
solutions of linear, non-uniformly elliptic equations}, \emph{Arch. Rational
Mech. Anal.}, \textbf{42} (1971), 42-50.

\bibitem{Trud2} \textsc{N.S. Trudinger:}{\ Linear elliptic operators with
measurable coefficients}, \emph{Ann. Scuola Norm. Sup. Pisa Cl. Sci.}, 
\textbf{27} (1973), 265-308.

\bibitem{zhikov} \textsc{V.V. Zhikov:}{\ On Lavrentiev phenomenon,} \emph{\
Russian J. Math. Phys.}, \textbf{3} (1995), 249-269.
\end{thebibliography}
\end{document}